\newcommand\eps{\varepsilon}
\newtheorem {conj}{Conjecture}[section]
\newtheorem {puzzle}{Puzzle}[section]
\newtheorem {thm}{Theorem}[section]
\newtheorem {lem}[thm]{Lemma}
\newtheorem {pr}[thm]{Proposition}
\newtheorem {df}[thm]{Definition}
\newcommand{\qed}{\hfill \ensuremath{\Box}}
\newenvironment{proof}[1][Proof.]{\begin{trivlist}
\item[\hskip \labelsep {\bfseries #1}]}{\qed\end{trivlist}}
\def\N{\mathbb N}
\def\p{p}
\title{Enumeration and asymptotics of restricted compositions having the same number of parts}
\author{Cyril Banderier\thanks{Laboratoire d'Informatique de Paris
 Nord, UMR CNRS 7030, Institut Galil\'ee, Universit\'e Paris 13, 99
avenue Jean-Baptiste Cl\'ement, 93430 Villetaneuse, France.
\texttt{http://www-lipn.univ-paris13.fr/$\sim$banderier/}}
\/ and Pawe{\l} Hitczenko\thanks{Departments of Mathematics and Computer
 Science, Drexel University, Philadelphia, PA 19104, USA. 
\texttt{http://www.math.drexel.edu/$\sim$phitczen/}
}
\thanks{supported in part by the NSA grant \#H98230-09-1-0062}
\thanks{corresponding author. Article submitted in April 2010, minor
  revision in October 2011. To appear in Discrete Applied Mathematics.}} 
\date{\today \\\vspace{1mm} Dedicated to the memory of Philippe Flajolet.}
\begin{document}

\maketitle
\noindent{\bf Abstract:} We study pairs and~$m$--tuples of
compositions of a positive integer~$n$ with parts restricted to a
subset~$\mathcal P$ of positive integers. 
We obtain some exact enumeration results for the number of tuples of such compositions
having the same number of parts. Under the uniform probability model, we obtain the asymptotics 
for the probability that two or, more generally,~$m$ randomly and independently chosen compositions of~$n$ have the same number of parts. 
For a large class of compositions, we show how a nice interplay
between complex analysis and probability theory allows to get full
asymptotics for this probability.
Our results extend an earlier work of B\'ona and Knopfmacher. 
While we restrict our attention to compositions, our approach is also
of interest for tuples of other 
 combinatorial structures having the same number of parts. 
\vspace{4mm}

\noindent{{\bf Keywords:} integer composition, pairs of combinatorial structures, local limit theorem, asymptotics of D-finite sequences, diagonal of algebraic generating function.
\vspace{-3mm}

\section{Introduction} In this note, we study tuples of compositions
of positive integers having the same number of parts,
 and  the 
 asymptotics of related generating functions satisfying some differential equations. Let us recall that a composition of a positive integer~$n$ is any~$k$--tuple~$(\kappa_1,\dots,\kappa_k)$,~$k\ge1$, of positive integers that sum up to~$n$. The~$\kappa_j$'s are called the parts (or summands) of a composition. It is elementary and well--known (see, e.g.~\cite{and}) that there are~${n-1\choose k-1}$ compositions of~$n$ with~$k$ parts, and thus there are~$2^{n-1}$ compositions of~$n$. By restricted compositions we mean compositions whose parts are confined to be in a fixed subset~$\mathcal P$ of~$\N$. 

The main motivation for this work is a recent paper~\cite{bk} in
which the authors studied pairs of compositions with the same number
of parts. Our extension of this work is directly connected  to the question of
obtaining the asymptotics of coefficients of functions satisfying a linear
 differential equation which, despite the deep work by Fabry, Frobenius,
 Fuchs, Picard and other analysts more than one century ago, remains
 open and is conjectured to be undecidable. We present here a new way
 to use probability theory {\em in addition to} complex analysis in order to solve this problem
for a large class of functions. 

In their paper~\cite{bk}, B\'ona and Knopfmacher studied  the asymptotic probability that  two randomly and independently chosen
compositions of~$n$ have the same number of parts. Furthermore, relying on the generating
function approach, for a few {\it specific} subsets~$\mathcal P$ they
addressed the same question for pairs of restricted compositions. In
each of these cases this probability is asymptotic to~$C/\sqrt n$
with~$C$ depending on~$\mathcal P$. Our main aim here is to extend
these results. First, we show that this asymptotics is
universal. That is, we show that for an {\it arbitrary} subset~$\mathcal P$ 
containing two relatively prime elements the probability that two independently chosen random compositions of~$n$ with parts restricted to~$\mathcal P$ 
have the same number of parts is asymptotic to~$C/\sqrt n$. The value of~$C$ depends, generally, on~$\mathcal P$ and is explicit.
 (See our Theorem~\ref{thm:samenoparts} and subsequent remarks, which include e.g. a correction of a constant appearing in~\cite{bk}.) 
Secondly, we consider the same question for~$m>2$ and we show that in this case the sought probability is asymptotic to~$C/\sqrt{n^{m-1}}$ for an explicitly given constant~$C$ whose value depends on~$\mathcal P$ and~$m$ only.
(See our Theorem~\ref{thm:msamenoparts}.)

B\'ona and Knopfmacher's approach  relied on complex analysis; 
the universality of using a more probabilistic technique was then noticed by B\'ona and Flajolet~\cite{bf}, where certain types of random trees were studied. 
Our approach is in one sense a mixture of complex analysis (which gives the full asymptotics expansion, up to a multiplicative constant, and with the price of heavy computations),
and probability theory (a local limit theorem which gives without any heavy computation the first asymptotic term, and therefore gives access to the multiplicative constant, but intrinsically  no access to further asymptotic terms).
B\'ona and Flajolet obtained, in particular, a general statement indicating how local limit theorem can help in evaluating probabilities that two independently chosen random structures of the same size have the same number of components 
(this is their Lemma~6 in~\cite{bf}, which corresponds to our Lemma~\ref{lem:llc0} for Gaussian density with a slightly different proof. Our Lemma~\ref{lem:llc0} was obtained independently, but later). 
 As we will see, these statements remain true if one considers more than two random structures.

In Section~2, we present our model. We proceed in Section~3 with some examples (and {\em en passant}, some nice questions in computer algebra) and argue on the intrinsic limitations of an approach relying only on complex analysis.
This serves~as a motivation for introducing the local 
 limit law result in Section~4, which finds application in Section~5, thus solving the initial problem of the asymptotic evaluation  of the probability that tuples of compositions have the same number of parts. We conclude with some perspectives in Section~6.

\section{Generating functions for pairs of compositions having the same number of parts}

Let us consider compositions with parts in a set~$\mathcal P$ 
(a fixed subset of~$\N$). To avoid trivial complications caused by the fact that there may be no compositions of a given~$n$ with all parts from~$\mathcal P$, 
we assume that~${\mathcal P}$ has at least two elements that are
relatively prime 
(except when explicitly stated otherwise). 

We introduce the generating function of the parts 
$\p(z) = \sum_{j\in {\mathcal P}} \p_j z^j$,
($\p_j$ is not necessarily 0 or 1, it can then be seen as the possible
colors or the weight of part~$j$). We thus assume that the~$p_j$'s
are non--negative real numbers such that~$\sum_{j\in \mathcal
 P}p_j>1$. This last condition is to ensure supercriticality of our
scheme (see Section~\ref{sec:local} below for more details). In the
classical situation when~$p_j$ is 0 or~1, this condition holds
automatically.
Denote by 
\begin{equation} \label{compsch}
P(z,u)= \sum_{n\geq 0,k\geq 0} P_{n,k}u^k z^n =\frac{1}{1-u \p(z)}
\end{equation}
the bivariate generating function 
of compositions of~$n$ where~$k$ encodes its number of
parts, and where the ``size'' of the composition is~$n$. 

With a slight abuse of notation, the corresponding univariate generating function is
\begin{equation}
P(z)= \sum_{n\geq 0} P_n z^n = \frac{1}{1-\p(z)} \,.
\end{equation}

This terminology is classical. For example, here are all the
compositions of 5 with 3 parts from the set~$\mathcal{P}=\{1,2,3,4,10\}$:
$5=1+1+3=1+3+1=3+1+1=1+2+2=2+1+2=2+2+1$. Accordingly,~$P_{5,3}=6$.

Let~$X_n^{\mathcal P}$ 
be the random variable giving the number of parts 
in a random composition of~$n$ with parts belonging to~$\mathcal
P$. 
Random means that we consider the uniform
distribution among all compositions of~$n$ with parts belonging to~$\mathcal P$.

Given two subsets~${\mathcal P}_1$ and~${\mathcal P}_2$ of~$\N$, we
consider 
 the probability~$\pi_n:=\operatorname{Pr}(X_n^{{\mathcal P}_1}=X_n^{{\mathcal P}_2})$
that a random composition of~$n$ with
parts in~${\mathcal P}_1$ has the same number of parts as a random
composition with parts in~${\mathcal P}_2$. 
We assume throughout that, whenever two 
such compositions are chosen, they are chosen independently and from now on we will not be explicitly mentioning it.
We then introduce the generating function~$D(z)$ of the number of pairs of compositions (the
first one with parts in~$\mathcal P_1$, the second one with parts in
$\mathcal P_2$) having the same size and the same number of parts. ($D$ stands for
``double'' or ``diagonal'', as~$D(z)$ can be obtained as a diagonal of
multivariate function.)

That is, we consider all~$k$-tuples of elements of~$\mathcal P_1$
and all~$k$-tuples of elements of~$\mathcal P_2$ such that their sum is~$n$.
For a fixed~$n$, let~$D_n$ be the total number of such configurations
(i.e., we sum over all~$k$).

In the next section, we deal with some interesting examples for which
we get explicit formulas.

\section{Some closed-form formulas}

\subsection{An example on tuples of domino tilings}\label{sec:dom}

Consider the 
classical combinatorial problem of  
tiling a~$2\times n$ strip by dominoes.
Any tiling is thus a sequence of either one horizontal domino or 
2 vertical dominoes.
The generating function of domino tilings 
is thus~$P(z)=\operatorname{Seq}(z+z^2) = \frac{1}{1-z-z^2}$,
which is the generating function of~$P_n=F_{n+1}$, where~$F_n$ is the Fibonacci number~$F_n=F_{n-1}+F_{n-2}$,~$F_0=0$,~$F_1=1$.
(Equivalently, the Fibonacci recurrence reflects the fact
that removing a horizontal domino on the top of an existing~$2 \times
n$ tiling leads to a~$2 \times (n-1)~$ tiling, while removing 
2 vertical dominoes on the top leads to a~$2 \times (n-2)~$ tiling.)
Let us now consider  a less trivial  question, which is archetypal
of the problem we consider in this article 
(note that it has a closed-form solution but we will address later in
this article similar problems having no such nice closed-form
solution):

\begin{puzzle}  Each of~$m$ children makes a tiling of a~$2\times n$ strip.
What is the probability~$\pi_n$ that these~$m$ tilings all have the same
number of vertical dominoes, when~$n$ gets large?
\end{puzzle}

For~$m=2$, the number of pairs is given by~$D(z)=[t^0] \sum_{k\geq 0}
p^k(z t) p^k(1/t)$, where~$\p(z)=z+z^2$, and the Cauchy formula gives
\begin{equation}D(z)=\frac{1}{2i\pi}\oint \frac{1}{1-\p(z t) \p(1/t)} \frac{dt}{t}= 
\frac{1}{2i\pi}\oint
\frac{{Num}(z,t)}{{Den}(z,t)} dt \,,\end{equation}
where~${Num}$ and~${Den}$ are polynomials in~$z,t$.
Let~$Z(z)$ be any root of~${Den}$, i.e.~${Den}(z,Z)=0$, such that~$Z$ is
inside the contour of integration for~$z\sim0$. Then, a residue computation gives: 
$$D(z)=\sum_Z
\frac{{Num}(z,Z)}{\partial_{t}{Den}(z,Z)}=\frac{1}{\sqrt{z^4-2z^3-z^2-2z+1}}~$$
$$= 1+z+2z^2+5z^3+11z^4+26z^5+ 63z^6+153z^7+ 376z^8+ 931z^9+O(z^{10})\,.$$
This is the sequence A051286 from~\cite{eis} $D_n=\sum_{k=0}^n \binom{n-k}{k}^2$, B\'ona and Knopfmacher~\cite{bk} gives a bijective proof that
 it is also the Whitney number of level~$n$ of the lattice of the ideals of the fence of order~$2 n$. 
The probability that 2 tilings of a~$2\times n$ strip
have the same number of vertical dominoes is therefore (via
singularity analysis, which can be done automatically with some
computer algebra systems, e.g. via  the
{\em equivalent} command of Bruno Salvy, in the Algolib Maple package available
at {\tt http://algo.inria.fr/libraries}):
\begin{equation}\label{eqn:pn}\pi_n=D_n/P_n^2\sim \frac{5^{3/4}}{2\sqrt{\pi} \sqrt{n}}+
\frac{5^{1/4} (\frac{11}{32}- \frac{\sqrt 5}{4})}{\sqrt{\pi} n^{3/2}}
 +O(\frac{1}{\sqrt{n^5}}) 
\approx \frac{.9432407854}{\sqrt n} \,.
\end{equation}
Note that this is consistent with the constant~$C$ given in Equation~(2.10)
in~\cite{bk}. Our computations are available on-line in a Maple session\footnote{See  {\tt http://www-lipn.univ-paris13.fr/$\sim$banderier/Pawel/Maple/}.}. 
Note that as Maple does not always simplify algebraic numbers
like humans would do (some denesting options are missing), we used here
some of our own denesting recipes 
 so that these nested radicals become more readable for human eyes.

For~$m=3$, it is possible to compute the diagonal~$D(z)$ via creative 
telescoping (as automated in Maple via the {\em MGfun} package
of Fr\'ed\'eric Chyzak or in Mathematica via the package {\em HolonomicFunctions} of Christoph Koutschan). This leads to the following differential equation:
$$0=\left( 4z^{7}+7z^{6}+7z^{5}+15z^{4}+41z^{2}+z+1 \right)D \left( z \right)~$$
$$+ \left( 5z^{8}+12z^{7}+7z^{6}+62z^{5}+88z^{3}+z^{2}+6z-1 \right) {\frac {d}{dz}}D \left( z \right)~$$
$$+z \left( z^{2}+1 \right) \left( z^{4}-z^{3}+5z^{2}+z+1 \right) \left( z^{2}+4z-1 \right) {\frac {d^{2}}{dz^{2}}}D \left( z \right) .$$
Here, the so-called Frobenius method gives the basis of the vector space of solutions of this ODE, under the form of local formal solutions around any singularity, by using the associated indicial polynomial (see~\cite[Chapter VII.9]{fs}).
In full generality, the dominating singularity of $D(z)$ is $z=\rho^m$, where $\rho$ is the radius of convergence of $1/(1-P(z))$; this can be proven via our Theorem~\ref{thm:msamenoparts}.
In our case, the Frobenius method gives around the dominating singularity $\zeta:=\sqrt5-2$:

\begin{eqnarray*}D(z)&=& \lambda_1    \left(  \frac{80 +41\sqrt5}{90}  \ln(z-\zeta)  + \frac{8+5\sqrt5}{9} \ln(z-\zeta) + O((z-\zeta)^2)  \right)\\
&+& \lambda_2  \left(  1-\frac{8+5\sqrt5}{9}(z-\zeta)+\frac{207+89 \sqrt5}{81}  (z-\zeta)^2+O((z-\zeta)^3)  \right)\\
\end{eqnarray*}
for some unknown coefficients $\lambda_1, \lambda_2$ (related to the so-called Stokes constants or connection constants)\footnote{\label{foot:frob}Note that, as typical with
 the Frobenius method (or also with the
Birkhoff-Tritjinski method, see~\cite{WimpZeilberger85}), it is not always possible to decide the
connection constant(s); in the next sections, we give a rigorous
probabilistic approach which allows to get this constant, and therefore
full asymptotics by coupling it with the Frobenius method!}.
However, only the first summand 
 contributes to the asymptotics of $D_n$
and a numerical scheme of our own allows to determine (with the help of the heuristic LLL algorithm) the value of $\lambda_1$.
Using singularity analysis then leads to 
$$\pi_n\sim 
\frac{5 \sqrt{15} }{6} \frac {1}{\pi n}+
\frac{5 (10 \sqrt3-9\sqrt {15})}{54} \frac{1}{\pi n^2}+O(1/n^3)
\approx \frac{1.027340740}{n} \,.$$

This asymptotics also proves that~$D(z)$ is not an algebraic function
(the local basis of the differential equation involves a logarithmic term).

\noindent For~$m=4$, creative telescoping leads to the following
differential equation:
{\tiny
$$ 2 \left(132{z}^{16}-3563{z}^{15}+\dots+110 \right) D \left( z\right) +2 \left( 209474{z}^{14}+\dots -1581z \right) {\frac {d}{dz}}D \left( z \right)~$$
$$ + \left( 704{z}^{18}+\dots -10143{z}^{2} \right) {\frac {d^{2}}{d{z}^{2}}}D(z) +z \left( 165{z}^{18}+\dots-55 \right) {\frac {d^{3}}{d{z}^{3}}}D \left( z \right)$$
$$ +{z}^{2} \left( z-1 \right) \left( z+1 \right) \left( {z}^{2}+z+1 \right) \left( {z}^{2}-7z+1 \right) \left( {z}^{2}-z+1 \right) \left( {z}^{4}+\dots+1 \right) 
 \left( 11{z}^{6}+\dots +11 \right) {\frac {d^{4}}{d{z}^{4}}}D \left( z \right) \,.$$}
Using the Frobenius method 
and a numerical scheme of ours, this leads to
$$\pi_n\sim \frac {25}{8} \frac{5^{1/4}\sqrt 2}{\sqrt{\pi n^3}}
+\frac{5}{256} \frac{5^{1/4} \sqrt{2} (47 \sqrt{5}- 240)} 
{\sqrt{\pi^3 n^5}} +O(\frac{1}{\sqrt{n^7}})
\approx \frac{1.186814138}{\sqrt{n^3}}\,.$$
It is noteworthy that this asymptotics is compatible with the fact that~$D(z)$ could be an
algebraic function. However, a guess based on Pad\'e approximants 
fails to find any algebraic equation. What is more,
the index of nilpotence~$\mod 2, 3, 5, 7, 11$ of~$D(z)$ is 3 (i.e. the
smallest~$i$ such that~$(d/dz)^i = L \mod p$ is~$i=3$ for primes~$p=2, 3,
5, 7, 11$... and~$L$ is the above irreducible unreadable linear differential operator
cancelling~$D(z)$). Therefore, according to a conjecture of
Grothendieck on the~$p$-curvature (see~\cite{BostanSchost}), the
function is not algebraic.

For~$m=5$, $D(z)$ is a non algebraic function satisfying a
differential equation of order 6 and of degree 38, which leads to 
$$\pi_n\sim \frac{25 \sqrt 5}{4} \frac{1}{\pi^2 n^2} \approx \frac{1.416006588}{n^2}\,.$$

The closed form of the coefficients is~$D_n(m)=\sum_{k=0}^n \binom{n-k}{k}^m$, as can also be obtained via a bijective proof approach.
It is possible to get their asymptotics via the Laplace method or our Theorem~\ref{thm:msamenoparts},
this leads to $\pi_n \sim C_m / \sqrt{ (\pi n)^{m-1} }$ with $C_m=(5^{3/4})^{m-1}/\sqrt{2^{m-1} m}$.
This allows us to give a proof of the following claim (which was a conjecture by Paul D. Hanna, see \cite[A181545]{eis}): 
\begin{pr} 
$D_{n+1}(m)/D_n(m) \sim  (F_m \sqrt{5} + L_m )/2$,
where $L_m$ are the Lucas numbers, defined by the same recurrence as the Fibonacci numbers $F_m$, but with different initial conditions, namely $L_0=2$ and $L_1=1$.
\end{pr}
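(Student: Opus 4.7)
The plan is to obtain the proposition as a direct consequence of the location of the dominant singularity of $D(z)$ combined with a well-known identity from the theory of Fibonacci/Lucas numbers.

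First, I would specialize the setting of this subsection: here the parts come from $\mathcal{P}=\{1,2\}$, so $p(z)=z+z^2$, and the univariate generating function of tilings is $P(z)=1/(1-z-z^2)$. Its radius of convergence is the positive root $\rho$ of $1-z-z^2=0$, i.e.\ $\rho=(\sqrt{5}-1)/2=1/\phi$, where $\phi=(1+\sqrt{5})/2$ is the golden ratio. According to the announcement in the text (which is established by Theorem~\ref{thm:msamenoparts}), the generating function $D(z)=\sum_n D_n(m)z^n$ has its dominant singularity at $z=\rho^m=1/\phi^m$, and by singularity analysis one gets an asymptotic of the form $D_n(m)\sim K_m \phi^{mn}/n^{(m-1)/2}$ for some positive constant $K_m$. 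Equivalently, one can observe this directly from the stated asymptotics $\pi_n\sim C_m/\sqrt{(\pi n)^{m-1}}$ together with $\pi_n=D_n(m)/P_n^m$ and $P_n=F_{n+1}\sim \phi^{n+1}/\sqrt{5}$.

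From this asymptotic, the ratio of consecutive terms satisfies
\[
\frac{D_{n+1}(m)}{D_n(m)} \;\sim\; \phi^m \cdot \Bigl(\frac{n}{n+1}\Bigr)^{(m-1)/2} \;\longrightarrow\; \phi^m,
\]
so it suffices to identify $\phi^m$ with $(F_m\sqrt{5}+L_m)/2$. This is immediate from Binet's formulas
\[
F_m=\frac{\phi^m-\psi^m}{\sqrt{5}},\qquad L_m=\phi^m+\psi^m,\qquad \psi:=\frac{1-\sqrt{5}}{2}=-1/\phi,
\]
since adding $\sqrt{5}\,F_m$ and $L_m$ makes the $\psi^m$ contributions cancel, yielding $F_m\sqrt{5}+L_m=2\phi^m$.

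There is essentially no real obstacle, provided one accepts the singularity location $z=\rho^m$ for $D(z)$ coming from Theorem~\ref{thm:msamenoparts}; the only minor point to be careful about is ensuring that this singularity is simple in the sense that it is isolated and strictly dominant on the circle $|z|=\rho^m$, so that the ratio test gives the growth rate $\phi^m$ (this is a standard consequence of the fact that $\rho$ itself is the unique dominant singularity of $P(z)$ on its disk of convergence, as already used in the text). The remainder of the argument is pure Binet algebra.
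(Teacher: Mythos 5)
Your proposal is correct and follows essentially the same route as the paper: both derive the growth rate $\phi^m$ from the asymptotics $\pi_n\sim C_m/\sqrt{(\pi n)^{m-1}}$ of Theorem~\ref{thm:msamenoparts} combined with $\pi_n=D_n(m)/P_n^m$ and $P_n\sim\rho^{-n}/(\rho p'(\rho))$, then invoke $\phi^m=(F_m\sqrt5+L_m)/2$. The only difference is cosmetic (you first write the full asymptotics of $D_n(m)$ before taking the ratio, while the paper manipulates the ratio directly) plus the small bonus that you supply the Binet-formula derivation of the identity $\phi^m=(F_m\sqrt5+L_m)/2$, which the paper cites without proof.
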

\begin{proof}
$$\frac{D_{n+1}(m)}{D_n(m)}=  \frac{\pi_{n+1}(m) P_{n+1}^m }{\pi_{n}(m) P_{n}^m } 
= \frac{C_m/ \sqrt{(\pi (n+1))^{m-1}}  P_{n+1}^m }{C_m/ \sqrt{(\pi n)^{m-1} } P_n^m}$$
$$=  \left(\frac{n}{n-1}\right)^{\frac{m-1}{2}}  \left(\frac{P_{n+1}}{P_n}\right)^m$$
$$\sim  \left(1+ (1/2-m/2) \frac{1}{n}+O(\frac{1}{n^2})\right)    \left(\frac{1}{\rho} \,  (1+O(\eps^n))\right)^m \sim \frac{1}{\rho^m}$$
where $\rho=p^{-1}(1)$ and the asymptotics for $P_n$ is explained in detail in the next section (Equation~\ref{asymptPn}).
In the case of $p(z)=z+z^2$, the claim then follows from $\rho=1/\phi$ and 
the exact formula $\phi^m= (F_m \sqrt{5} + L_m )/2$.
\end{proof}

Note that for all odd values of $m>2$, the presence of an integer power of~$\pi$ in the asymptotics of $D_n(m)$ implies that the function $D(z)$ can not be algebraic,
whereas for all even values of $m>2$, the asymptotics match the patterns appearing in the asymptotics of coefficients of algebraic functions. However, we expect the following conjectures to be true.
\begin{conj} For any rational function $p(z) \in \N(z)$  (with $p(1)>1$), the generating function $D(z)$ is not algebraic for $m>2$.
\end{conj}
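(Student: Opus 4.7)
The plan is to split the argument according to the parity of $m$.

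For odd $m > 2$, the claim is essentially an immediate consequence of the asymptotic analysis already carried out in the excerpt: Theorem~\ref{thm:msamenoparts} yields $D_n(m) \sim C\,\rho^{-mn}\,n^{-(m-1)/2}\,\pi^{-(m-1)/2}$ with $(m-1)/2$ a positive integer and $C$ an explicit nonzero algebraic constant. The coefficient sequence of an algebraic function admits an asymptotic expansion whose leading constants are algebraic numbers divided by a product of $\Gamma$-values at rational points; by the reflection and multiplication formulae for $\Gamma$, such constants can contain at most half-integer powers of $\pi$ (arising from single factors $\Gamma(1/2) = \sqrt{\pi}$), never pure integer powers $\pi^{k}$ with $k\geq 1$. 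Hence $D(z)$ cannot be algebraic when $m$ is odd.

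For even $m > 2$ the asymptotic factor is $\sqrt{\pi}^{\,m-1}$, which is compatible with algebraic coefficient asymptotics, so a structural argument is needed. The approach is to exploit the representation of $D(z)$ as the diagonal of an $m$-variate rational function, obtained by applying the Cauchy formula to $\sum_k (\p(w_1)\cdots \p(w_m))^k = 1/(1 - \p(w_1)\cdots \p(w_m))$. For $m=2$ the Furstenberg--Cartier theorem recovers algebraicity, but for $m\geq 3$ diagonals of rational functions are only D-finite in general. Two routes to non-algebraicity then suggest themselves: (a) show that the minimal linear differential operator $L$ annihilating $D(z)$ produces a logarithmic Jordan block at some regular singular point, which is forbidden for algebraic functions whose local expansions are pure Puiseux series; (b) verify that the $p$-curvature of $L$ is non-nilpotent for infinitely many primes $p$, invoking Katz's theorem on the Grothendieck conjecture for operators of geometric origin (to which $L$ belongs).

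The main obstacle will be option (b): making the $p$-curvature criterion unconditional for an arbitrary rational $\p(z)$ requires controlling the reductions of $L$ modulo infinitely many primes simultaneously, whereas current techniques only allow heuristic checks at individual primes (as illustrated in Section~\ref{sec:dom} for $m=4$). Option (a) appears more tractable: one would localize at some non-dominant singularity $\rho^{m}\omega$, where $\omega$ is an $m$-th root of unity different from $1$, apply the Frobenius method, and exhibit a logarithm in the local basis of solutions by showing that two exponents of the indicial polynomial coincide while the associated recurrence is obstructed. The permutation symmetry of the diagonal construction in $(w_1,\dots,w_m)$ should be used to control the indicial polynomials uniformly in $\p$ and to reduce the proof to a finite configuration check.
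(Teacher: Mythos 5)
The statement you are attempting is a \emph{conjecture} in the paper, not a theorem; the authors prove it only for odd $m>2$ and write explicitly that they ``are unaware of any way of proving this for all even $m>2$ at once.'' Your odd-$m$ argument is correct and is exactly the paper's: since $(m-1)/2$ is a positive integer, the asymptotics of $D_n(m)$ carry a genuine integer power of $\pi$, whereas the coefficients of an algebraic function have an asymptotic constant of the form $A/\Gamma(s)$ with $A$ algebraic and $s$ a rational non-integer, which can only produce a single factor of $\sqrt\pi$; equivalently, the singular exponent $-(m-1)/2$ is a negative integer, which cannot be a Puiseux exponent at a branch point. So the odd case is complete, by the same route the paper takes.

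For even $m>2$ you correctly observe that the asymptotic obstruction disappears and that something structural is needed, but you do not close the argument, and neither does the paper. Your route (b), the $p$-curvature criterion, is precisely what the paper invokes for the single example $p(z)=z+z^2$, $m=4$, but there it is applied only at a handful of small primes and is explicitly conditional on Grothendieck's conjecture; as you already anticipate, it does not give an unconditional proof for a general rational $p$. Your route (a), exhibiting a logarithm in the local basis at some non-dominant singularity $\rho^m\omega$, is a new suggestion not in the paper, and it is a reasonable one, but you supply no mechanism forcing the indicial polynomial of the annihilating operator to have a repeated root (or an integer gap with an obstructed recurrence) at such a point, uniformly in $p$; the paper's own $m=4$ computation already shows the operator becomes opaque in the simplest case, so the ``finite configuration check'' you gesture at is not in reach. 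The paper additionally mentions two conditional tools you omit, namely evaluating $D(z)$ at a point whose value is provably transcendental, and the Christol--Kamae--Mend\`es-France--Rauzy criterion on automatic sequences, but it notes both succeed only in special cases. In short, your proposal accurately reconstructs the state of knowledge (odd case proved, even case open) and the odd-$m$ half is a valid proof, but the even-$m$ half is a research program rather than an argument, and the conjecture as a whole remains unproved.
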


It includes the specific case $D(z)= \sum_{n\geq 0}   D_n z^n$
with  $D_n= \sum_{k=0}^n \binom{n-k}{k}^m $ (non algebraicity of our initial puzzle)
or $D_n= \sum_{k=0}^n \binom{n}{k}^m   $ (non algebraicity of Franel numbers of order $m$). 
{\em Nota bene}: We gave here several ways to prove the non-algebraicity for some value of $m$, and we proved it for all odd $m>2$, we are however unaware of any way  of proving this for all even $m>2$ at once,
except,  in some cases, an evaluation at some  
$z$ leading to a transcendental number, 
or the Christol--Kamae--Mend\`es-France--Rauzy theorem on automatic sequences.

\begin{df}[Closed-form sequence] A sequence of integers $D_n$ is said to have a closed-form expression if it can be expressed as nested sums of hypergeometric terms, 
with natural boundaries  (i.e. the intervals of summation are 0 and $n$). N.b: the number of nested sums has to be independent of $n$.
\end{df}

Typical examples of closed-form expression 
are nested  sums of binomials; without loss (or win!) of generality, 
it is possible to allow more general intervals of summation or internal summands.

\begin{conj} Let D(z) be like in equation~\ref{eqnD} below (for any rational functions $p_i(z) \in \N(z)$\/),
then its coefficients $D_n$ can be expressed in closed form.
\end{conj}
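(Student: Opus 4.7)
The plan is to exhibit $D_n$ explicitly as a nested hypergeometric sum with natural boundaries. From the bivariate generating function~\eqref{compsch}, the number of weighted compositions of~$n$ with~$k$ parts using the alphabet~$\mathcal P_i$ equals $[z^n]\, p_i(z)^k$, so the diagonal~$D(z)$ specialises to
\[
D_n \;=\; \sum_{k=0}^{n} \prod_{i=1}^{m} [z^n]\, p_i(z)^k.
\]
The outer summation already has natural boundaries~$0$ and~$n$, since any composition of~$n$ with parts in~$\N$ has at most~$n$ parts. It therefore suffices to show that each factor $[z^n]\, p_i(z)^k$ admits a closed-form expression as a nested hypergeometric sum in~$(n,k)$, with all summation indices bounded linearly by~$n$ and~$k$.

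For a polynomial $p_i(z) = \sum_{j=1}^{d_i} p_{i,j}\, z^{a_{i,j}}$, the multinomial theorem immediately gives
\[
[z^n]\, p_i(z)^k \;=\; \sum_{\substack{k_1+\cdots+k_{d_i}=k \\ a_{i,1}k_1+\cdots+a_{i,d_i}k_{d_i}=n}} \binom{k}{k_1,\ldots,k_{d_i}} \prod_{j=1}^{d_i} p_{i,j}^{k_j},
\]
a nested hypergeometric sum whose indices lie in~$[0,n]$ after elimination of the two linear constraints. For a genuinely rational $p_i(z) = A_i(z)/B_i(z)$ with $B_i(0) \neq 0$, I would combine the multinomial expansion of $A_i(z)^k$ with the generalized binomial expansion of $B_i(z)^{-k}$: writing $B_i(z) = b_{i,0}\bigl(1 - z\,C_i(z)\bigr)$ for a polynomial~$C_i$, we get
\[
\frac{1}{B_i(z)^k} \;=\; b_{i,0}^{-k} \sum_{\ell \ge 0} \binom{k+\ell-1}{\ell}\, z^\ell\, C_i(z)^\ell,
\]
and applying the multinomial theorem inside $A_i(z)^k$ and $C_i(z)^\ell$ produces a nested hypergeometric expression for $[z^n]\, p_i(z)^k$.

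Substituting these closed forms back into the formula for~$D_n$ and merging the products over~$i$ into a single multi-index yields the desired nested-hypergeometric representation of~$D_n$. The main obstacle I anticipate is the rational case: the $\ell$-sum arising from $B_i(z)^{-k}$ is \emph{a priori} infinite, and one must argue that extracting~$[z^n]$ truncates it to $\ell \in [0,n]$, so that the final sum really has natural boundaries rather than being unbounded. A related subtlety is to keep the \emph{number} of nested summations independent of~$n$; this is plausible because it depends only on~$m$ and on the numerator/denominator degrees of the~$p_i$, but one has to make this count explicit in order to match the paper's definition of closed-form sequence.
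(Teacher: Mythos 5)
The statement you are proving is labelled a \emph{Conjecture} in the paper, and the paper gives no proof of it: the surrounding text merely says that the explicit formulas of Section~3 and Tables~1--2 ``somehow illustrate'' this conjecture, and then subsumes it into a broader (and genuinely open) conjecture about D-finite functions. So there is no paper argument to compare your proposal against; you have attempted to supply what the authors left as a heuristic claim.

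Your argument looks essentially right for the case the authors have in mind, namely $p_i(0)=0$ (so parts are positive) and $B_i(0)\neq 0$ (so $p_i$ is a genuine power series); both are needed and you only state the second. The identity $D_n=\sum_{k=0}^n\prod_{i=1}^m[z^n]p_i(z)^k$ follows directly from the Cauchy integral~(\ref{eqnD}) after expanding $1/(1-y)$, and each $[z^n]p_i(z)^k$ does become a multinomial/negative-binomial nested sum with a bounded number of indices. The two worries you raise are the right ones and both resolve cleanly: the $\ell$-sum coming from $B_i(z)^{-k}$ truncates because $z^\ell C_i(z)^\ell$ has valuation $\ge\ell$, and the number of nested indices is a function only of $m$ and the degrees of the $A_i,B_i,C_i$, hence independent of $n$. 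The one technicality you do not address explicitly is that eliminating the two linear constraints (the $k_j$-sum equals $k$, and the weighted $k_j$-sum equals $n$) can introduce fractional or negative arguments into the multinomial coefficient; this is exactly why the paper's Definition of ``closed-form'' is deliberately loose (``it is possible to allow more general intervals of summation or internal summands'') and why Table~1 carries the convention ``Binomials with fractional entries are considered as zero''. With that convention made explicit, your construction fits the paper's definition. In short: the paper has no proof, your argument is a valid one, and the reason the authors kept Conjecture~3.4 as a conjecture seems to be that they treat it as an instance of the much harder general claim about D-finite coefficients, not because the $D(z)$ case resists the elementary expansion you give.
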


 An effective way of finding this nested sum could be called a "reverse Zeilberger algorithm". It then  makes sense
to give the following broader conjecture: 
\begin{conj}
The coefficients of any D-finite function (i.e. a solution of a linear differential equation with polynomial coefficients)
can be expressed in closed form. 
\end{conj}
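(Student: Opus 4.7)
The plan is to approach the conjecture inductively on the order $r$ of the linear differential equation annihilating the D-finite function $f(z)$. The base case $r=1$ is immediate: any solution of $q_1(z) f'(z) + q_0(z) f(z) = 0$ is, up to a multiplicative constant, a hypergeometric function whose Taylor coefficients form a hypergeometric sequence, and a hypergeometric sequence is itself a closed-form expression (a nested sum with zero levels of nesting, which is permitted by the definition).

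For the inductive step, I would exploit the well-known equivalence between D-finiteness of $f(z)$ and P-recursiveness of its coefficient sequence $(f_n)$. The target would be to realize every P-recursive sequence as a definite hypergeometric sum
$$ f_n = \sum_{k=0}^{n} F(n,k), $$
where $F(n,k)$ is hypergeometric in both $n$ and $k$, since iterating such a representation against a summand of strictly smaller "complexity" yields a finite nested hypergeometric expression. This is exactly the inverse of Zeilberger's creative telescoping: Zeilberger's algorithm starts from a definite hypergeometric summand $F(n,k)$ and produces a P-recurrence for the sum, whereas what is needed here is the reverse map, producing a hypergeometric $F$ from a given P-recurrence operator $R(n, S_n)$. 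Concretely, I would search for a hypergeometric $F(n,k)$ and a certificate $G(n,k)$ satisfying $R(n, S_n) F(n,k) = G(n,k+1) - G(n,k)$ with boundary terms matching the initial data of $(f_n)$; this is the "reverse Zeilberger algorithm" the authors allude to in the previous conjecture.

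The main obstacle, and really the crux of the conjecture, is the existence of such a hypergeometric representation in the irreducible case. When the recurrence operator factors over the Ore algebra $\mathbb{Q}(n)[S_n]$ into first-order factors, iterated variation of constants against the base case produces an explicit nested hypergeometric sum, so the conjecture does hold in this "solvable" setting by induction on the number of factors (this already covers a large swath of examples, including everything arising from rational $p(z)$ through sequence operations that preserve solvability). The genuinely hard case is that of an irreducible operator: here one would need a Galois-theoretic input, for instance a theorem asserting that the Picard--Vessiot extension of an arbitrary D-finite function is generated by iterated hypergeometric elements lying in a suitable field of nested sums. No such result is currently available, and even producing a single explicit candidate nested sum for a generic irreducible order-$3$ recurrence, beyond Ap\'ery-type examples such as $A_n = \sum_{k=0}^n \binom{n}{k}^2 \binom{n+k}{k}^2$, seems to demand genuinely new ideas. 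Realistically, I would expect any complete proof either to weaken the definition of closed form (perhaps allowing D-finite summands rather than strictly hypergeometric ones), or to restrict to the solvable subclass where the induction above actually terminates; anything stronger appears to lie well beyond current techniques.
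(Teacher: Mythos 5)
The statement you are addressing is, in the paper, an open \emph{conjecture}: the authors offer no proof, explicitly speculate about a ``reverse Zeilberger algorithm'' as the missing ingredient, and even flag (citing Garoufalidis on G-series) that single hypergeometric sums are provably insufficient, which is precisely why their Definition of closed form allows arbitrary finite nesting depth. There is therefore no proof in the paper to compare against, and the only relevant question is whether your proposal actually closes the conjecture. It does not, and to your credit you say so.

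Your analysis of the terrain is nevertheless essentially sound. The observation that when the coefficient recurrence operator factors completely over the Ore algebra $\mathbb{Q}(n)\langle S_n\rangle$ into first-order pieces, iterated variation of parameters yields d'Alembertian (nested hypergeometric sum) solutions is correct, and it does cover the explicit $D(z)$'s computed in Section~3 of the paper; your ``reverse Zeilberger'' formulation, seeking $F(n,k)$ hypergeometric in both variables and a certificate $G$ with $R(n,S_n)F=G(n,k+1)-G(n,k)$ plus matching boundary data, is exactly the mechanism the authors gesture at. One imprecision worth fixing: you phrase the induction on the order $r$, but the inductive step only applies when a right factor can be peeled off, so the true induction is on the length of a factorization chain and terminates only in the solvable case. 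The genuine gap, which you correctly isolate as the crux, is the irreducible case: there is no known structure theorem for difference Galois groups guaranteeing that solutions of an irreducible P-recurrence lie in a tower of nested-sum extensions, and Ap\'ery-type sums are isolated identities rather than instances of a general construction. Until such a theorem (or a counterexample) appears, the conjecture remains open, and your write-up should be read as a correct map of the obstacles rather than a proof.
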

Note that it follows from the theory of G-series that this 
does not hold for closed-forms of the type "one sum of hypergeometric terms"~\cite{Ga09}.
The formulas we will give in the rest of this section are somehow 
illustrating these conjectures.

\subsection{Other nice explicit formulas}

It is clear from the previous subsection that we could play the same
game for any~$m$-tuple of compositions with parts restricted to~$m$
different sets, encoded by~$\p_1(z), \dots, \p_m(z)$. 
\begin{pr}\label{pr31} The generating function for the number of~$m$-tuples of 
compositions having the same number of parts is given by
\begin{equation}\label{eqnD}
D(z)=\frac{1}{(2i\pi)^{m-1}} \oint \frac{1}{1-\p_1(z t_2 \dots t_m)
 \p_2(1/t_2) \dots \p_m(1/t_m)} \frac{dt_2}{t_2} \dots
\frac{dt_m}{t_m}\,.
\end{equation}
\end{pr}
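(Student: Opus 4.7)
The plan is to represent $D(z)$ as the full diagonal of a simple $m$-variable rational generating function, and then to apply the classical Cauchy-type diagonal extraction $m-1$ times.

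First, I would observe that $[u^k]\,\frac{1}{1-u\,\p_i(z)} = \p_i(z)^k$, so $\p_i(z)^k$ is the generating function (by size) of compositions with parts in $\mathcal{P}_i$ having exactly $k$ parts; equivalently $[z^n]\p_i(z)^k = P_{n,k}^{(i)}$. Directly from the definition of $D_n$,
\[
D(z) \;=\; \sum_{n\ge 0}\biggl(\sum_{k\ge 0} \prod_{i=1}^{m} P_{n,k}^{(i)}\biggr) z^n.
\]
Next, I would introduce the auxiliary $m$-variable series
\[
F(z_1,\dots,z_m) \;:=\; \sum_{k\ge 0} \prod_{i=1}^{m} \p_i(z_i)^k \;=\; \frac{1}{1 - \p_1(z_1)\cdots \p_m(z_m)},
\]
whose coefficient $f_{n_1,\dots,n_m}$ of $z_1^{n_1}\cdots z_m^{n_m}$ equals $\sum_k \prod_i P_{n_i,k}^{(i)}$. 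Consequently $D(z)$ is by construction the full diagonal $\sum_n f_{n,\dots,n}\,z^n$ of $F$.

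To extract this diagonal, I would perform the change of variables $z_1 = z\,t_2 t_3 \cdots t_m$ and $z_j = 1/t_j$ for $j=2,\dots,m$, which turns $F$ into
\[
\sum_{n_1,\dots,n_m} f_{n_1,\dots,n_m}\, z^{n_1}\prod_{j=2}^{m} t_j^{n_1-n_j}.
\]
Applying the Cauchy formula $\frac{1}{2i\pi}\oint g(t)\,\frac{dt}{t} = [t^0]g(t)$ once for each of $t_2,\dots,t_m$ selects exactly the terms with $n_2=\cdots=n_m=n_1$, which is precisely $D(z)$; substituting the closed form of $F$ yields the announced formula. The only point needing care is a routine convergence/Fubini argument: choosing the contours as circles $|t_j|=1$ and $z$ in a small neighborhood of $0$, the fact that each $\p_i(0)=0$ ensures absolute convergence throughout. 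The conceptual content of the proof is simply recognizing $D(z)$ as a multivariable diagonal; no real obstacle lies ahead.
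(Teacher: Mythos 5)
Your proof is correct and takes essentially the same route the paper does: the paper sketches exactly this Cauchy-integral diagonal extraction for $m=2$ (writing $D(z)=[t^0]\sum_{k\ge 0}p^k(zt)p^k(1/t)$ and applying the contour formula) and then states Proposition~\ref{pr31} as the obvious $m$-variable generalization without writing out the details. You have simply filled in the general case by the same method, identifying $D(z)$ as the full diagonal of $\frac{1}{1-\p_1(z_1)\cdots\p_m(z_m)}$ and extracting it with $m-1$ contour integrals under the substitution $z_1=zt_2\cdots t_m$, $z_j=1/t_j$.
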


Therefore, one should not expect any nice closed-form solution for
$D(z)$ whenever~$m>2$; while for~$m=2$, whenever all the~$\p_i(z)$'s are
 polynomials or rational functions,~$D(z)$ will be an algebraic
function whose coefficients can be expressed by nested sums of binomial
coefficients (using Lagrange inversion). 

For example, if~$\p_1(z)=\p_2(z)=2z+z^2$ (which can be considered as tilings
with bicolored horizontal dominoes), 
one gets~$D_{n+1}(2)=\sum_{k=0}^n \sum_{j=0}^k \binom{k}{j}\binom{k+j}{j}$.

If~$\p_i(z)=\frac{z}{1-z}$ (i.e., we consider compositions with any
parts), then $D_n$ is the sequence of Franel numbers of order $m$:
$D_{n+1}(m)=\sum_{k=0}^n \binom{n}{k}^m$, and we will see in Section~\ref{sec5} that the probability 
that~$m$ unrestricted compositions of~$n$ have the same number of
parts is thus~$\pi_n \sim C_m/\sqrt{(\pi n)^{m-1}}~$,
with~$C_m=\sqrt{2^{m-1}/m}$.

Note that if we replace $p_i(z)$ (for $i>1$)  by $(1+p_i(z))$ in the integral formula of Proposition~\ref{pr31}, 
then this gives the generating function of tuples of compositions such that the {\em number} of parts is in decreasing order.

Let us add a few examples for which parts are in two different sets
$\mathcal P_1$ and~$\mathcal P_2$.
If~$\p_1(z)=z+z^2$ and~$\p_2(z)=z+2z^2$,
then one gets an interesting case as we have here 
$$\pi_n\sim
\frac{\sqrt{72+42\sqrt3} (\sqrt5-5) (\sqrt2-2)}{12 \sqrt{\pi n}}
\left(\frac{1-\sqrt2-\sqrt5+\sqrt{10}}{2 (2-\sqrt3)}\right)^n
\approx 1.62
\frac{.95^n}{\sqrt{\pi n}}\,,$$ which is therefore exponentially smaller
that the order of magnitude of our previous examples.
We will comment later on this fact.

Going to a slightly more general case~$\p_i(z)= \alpha_i z + \beta_i z^2$, one has for~$m=2$:
$$D(z)=\frac{1}
{\sqrt{1-2 \alpha_1 \alpha_2 z+ (\alpha_1^2 \alpha_2^2 -2 \beta_1 \beta_2) z^2-2 \alpha_1 \alpha_2 \beta_1 \beta_2 z^3 +\beta_1^2 \beta_2^2 z^4 }}\,.$$

Therefore the generating function only depends
on the products $\alpha_1 \alpha_2$ and $\beta_1 \beta_2$.
This implies e.g. that $p_1(z)= 2 z + 3 z^2$ and $p_2(z)= 3 z + 5 z^2$
will lead to the same $ D(z)$ as $p_1(z)=6 z + z^2$ and $p_2(z)= z + 15  z^2$.

Note that $D(z)$ factors nicely when~$\beta_1=\beta_2=1$:
$$D(z)=1/\sqrt{(\alpha_1 \alpha_2 z-1-2 z-z^2) (\alpha_1 \alpha_2 z-1+2 z-z^2)}\,.$$
If, additionally,~$\alpha_1 \alpha_2=\pm 4$ this gives the sum of
central Delannoy numbers~\cite{BaSc05}: 
$$D(z)=1/(1-z) \times 1/\sqrt{1+(2+|\alpha_1 \alpha_2|) z+z^2}\,.$$

 When the allowed parts are only $a$ and $b$, i.e. $P_i(z)=\alpha_i z^a + \beta_i z^b$,
then all the compositions in the constrained tuples have necessarily 
the same number of parts "$a$" (this also holds for the number of parts "$b$").
Choosing the order of the $n_1$ parts "$a$" and the $n_2$ parts "$b$"
leads to the formula $$D_n(m)=\sum_{ n_1 a+n_2 b =n} \binom{n_1+n_2}{n_1}^m (\alpha_1 \dots \alpha_m)^{n_1} (\beta_1 \dots \beta_m)^{n_2} .$$
There is no longer such a simple formula as soon as one has more than two allowed parts,
because the parts can then compensate each other in many ways,
e.g., assume that the allowed parts contain 3 integers $0<a<b<c$, 
then one can always create a composition $\mathcal{P}_1$ 
having $n_1$ "$a$", $n_2$ "$b$", $n_3$  "$c$" and a composition $\mathcal{P}_2$ having 
$m_1$ "$a"$, $m_2$ "$b$" and $m_3$ "$c$" such they have the same number of parts 
$n_1+n_2+n_3=m_1+m_2+m_3$, but   $(n_1,n_2,n_3) \neq (m_1,m_2,m_3)$.
To achieve this, consider 
$n_1= c-b$, $n_3= b-a$, $n_2=n_1+n_3$, $m_2=0$, $m_1=2 n_1$, $m_3=2 n_3$,
thus one gets two different compositions of $n$: $n=n_1 a+n_2 b+n_3 c =  m_1 a+ m_2 b + m_3 c$.

If~$\p_1(z)=\alpha z+ \beta z^2$ and~$\p_2(z)=z^2 /(1-z^2)$, then~$D_{2n}=\beta^n$, 
while if~$\p_1(z)=\alpha z+\beta z^2$ and~$\p_2(z)=z/(1-z)$, then
$$D(z)= \frac{1}{2}+\frac{1}{2} \frac{1+\alpha z}{\sqrt{1-2\alpha z+z^2 ( \alpha^2-4 \beta)}}\,.$$
So, a nice surprise is given by~$\p_1(z)=z+z^2$ and~$\p_2(z)=z/(1-z)$ ,
for which we get~$D(z)=1/2+1/2 \sqrt\frac{1+z}{1-3z}$, which is known to
be the generating function of directed animals \cite[A005773]{eis}.
This sequence also counts numerous other combinatorial structures:
variants of Dyck paths, pattern avoiding permutations, base 3 $n$-digit numbers with digit sum~$n$,\,\dots
 It also counts prefixes of Motzkin paths 
and this leads to an alternative formula $D_{n+1}=M_n=3^n -\sum_{k=0}^{n-1}  3^{n-k-1} E_{k} $, where $M_n$ and $E_n$ stand for meanders and excursions of length~$n$,
following the definitions and notations from~\cite{BaFl02}.

We leave
to the reader the pleasure of finding a bijective proof of all of 
this. (Some of them go via a bijection with lattice paths, as done in~\cite{bk},
and then via the bijection between heaps of pieces and directed animals, see Fig.~1.)
Note that some of the bijections can lead to efficient uniform random generation algorithms.

\begin{figure}[h]
\parbox{62mm}{
\includegraphics[height=25mm,width=3cm]{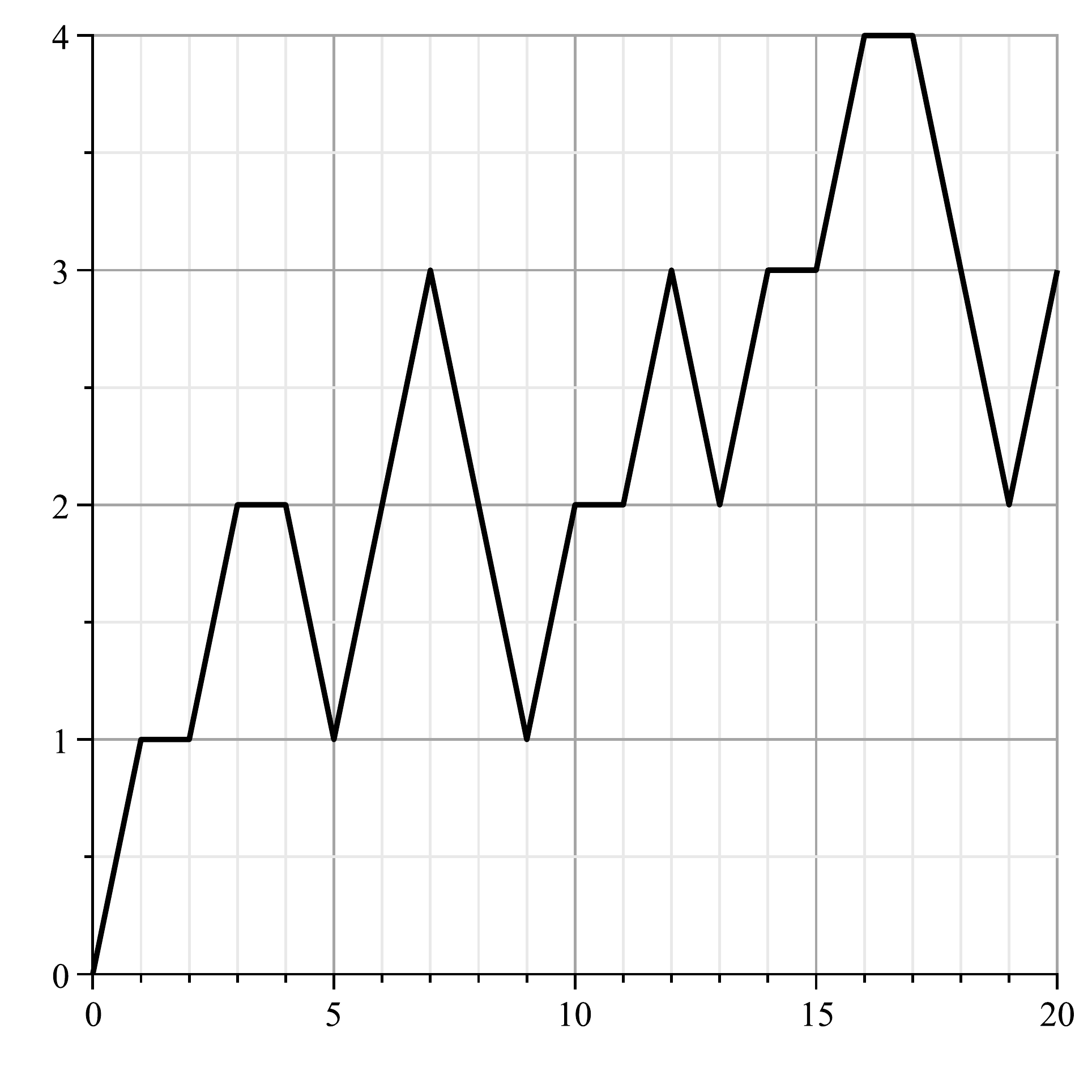}
\includegraphics[height=27mm]{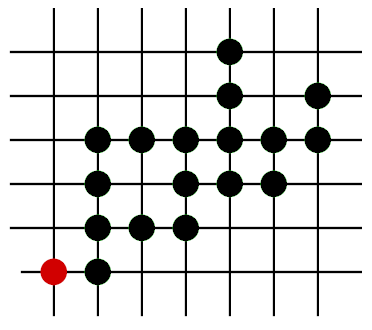}
}
\parbox{58mm}{
\caption{
Pairs of compositions having the same number of parts (e.g. (1,2,2,1,2,1,2,2,2,2,2) and (3,1,1,1,3,2,1,1,4,1,1))  are in bijection with several combinatorial objects, e.g. lattice paths (left) and directed animals (right).
}
}
\end{figure}

In summary, it may seem possible to compute everything in all cases;
however, for a generic~$\mathcal P$, in order to compute the constant~$C_m$ involved in
$\pi_n\sim C_m \frac{1}{\sqrt{(\pi n)^{m-1}}}$, we need heavier 
 computations if the degrees of the~$\p_i(z)$'s get large or if~$m$ is large.
Current state of the art algorithms will take more than one day for~$m=6$, and
gigabytes of memory, so this ``computer algebra'' approach (may it be
via guessing or via holonomy theory) has some intrinsic
limitations. What is more, for a given~$\mathcal P$, it remains a nice
challenge to get a rigorous (Zeilbergerian computer algebra) proof for
all~$m$ {\em at once}.

In the next sections, we show that the technical conditions to get a
local limit law hold, and that this allows to get
the constant~$C$, for any~$\mathcal P$, for all~$m$.

We conclude this section with Tables 1 and 2 (see next page) summarizing our main closed-form formulas.

\begin{table}
{\small \hspace{-4mm}
\setlength{\tabcolsep}{0cm}
\begin{tabular}{|c|c|c|}
\hline
allowed   & number of tuples  ($\mathcal P_1, \dots, \mathcal P_m$)  &  Sloane's On-line encyclopedia of integer sequences \\
parts in $\mathcal P_i$    & of compositions of $n$  having    & alternative description \\
& the same number of parts  & \\
\hline
&&The notation  $a^{\alpha_i}$ means that  the part $a$ is considered \\
$\{a^{\alpha_i},b^{\beta_i}\}$  & $D_n(m)=\displaystyle \sum_{k=0}^{n/b} \binom{\frac{n+k(a-b)}{a}}{k}^m (\alpha_1\dots\alpha_m)^{(n-kb)/a} (\beta_1\dots\beta_m)^k  $  
& with weight (or multiplicity) $\alpha_i$ in the compositions of  $\mathcal P_i$.  \\
&& (Binomials with fractional entries are considered as zero).\\
\hline
&&\\
$\{1,2\}$ &  $D_n(m)=\displaystyle \sum_{k=0}^n  \binom{n-k}{k}^m$  & $m=2$, A051286: Whitney number of level $n$ of the lattice  \\
&&of the ideals of the fence of order $2n$.\\
\hline
&&$m=2$, A089165: partial sums of the central Delannoy numbers, \\
$\{1,1,2\}$  &  $\displaystyle D_n(2)=\sum_{k=0}^{n-1}\sum_{j=0}^k \binom{k}{j}  \binom{k+j}{j}
=4^n \sum_{k=0}^n \binom{n-k}{k}^2/16^k$  &resistance between two nodes of an infinite lattice of unit resistors,\\
&& \# of peaks of odd level in Schroeder path.\\
\hline
&&Franel numbers \\
 $d \N$   &  $\displaystyle D_{dn}(m)=\sum_{k=0}^n  \binom{n}{k}^m$  &  ( $m=2$ simplifies to the 
central binomial numbers $\binom{2n}{n}$, A000984,  \\
&&  $m=3,4,5,6$: A000172, A005260 ,  A005261, A069865). \\
\hline
&&\\
$2 \N-1 $  & $\displaystyle D_{n+1}(m)= \sum_{k=0}^{n}  \binom{n-k}{k}^m$  & Same as  pairs of compositions of $n-1$ with parts in \{1,2\}  \\
&&  ($m=2,3,4,5$: A051286, A181545, A181546, A181547).\\
\hline
&& \\
$\{ n>1\}$  & $\displaystyle D_{n+2}(m)= \sum_ {k=0}^{n}  \binom{n-k}{k}^m$   & \\
&& \\
\hline
\end{tabular}
\caption{Summary of  the main closed-form formulas  (for any $m$) of our Section 3.}
}
\end{table}

\begin{table}
{\small
\begin{tabular}{|c|c|c|c|c|}
\hline
allowed  &allowed &number of pairs ($\mathcal P_1, \mathcal P_2$) of &OEIS &alternative \\
parts &parts &compositions of $n$  having &reference 
 &OEIS description~\cite{eis} \\
 in $\mathcal P_1$ & in $\mathcal P_2$   &the same number of parts  && \\
\hline
&&&&\\
$\{1,2\}$ &$\{1,2,2\}$  &$\displaystyle \sum_{k=0}^n  \binom{n-k}{k}^2 2^k$  &A108488  &Expansion of $1/\sqrt{1-2x-3x^3-4x^3+4x^4}$.  \\
&&&&\\
\hline
&&&&\\
$\N  $ &$ \N \cup \{ 0 \} $  &  $\displaystyle  \sum_{k=0}^{n}  \binom{n-1}{k} \binom{n+k}{k}
=\sum_{k=0}^{n-1} \frac{n-k}{n}  \binom{n}{k}^2  2^{n-k-1}$
& A047781 & Convolution of central Delannoy numbers  \\
&&&& and Schroeder numbers. \\
\hline
&&&&Chebyshev transform of the central \\
$\{1,2\}$ & $\{1,1,2\}$  & $\displaystyle  \frac{1}{2^n}   \sum_{k=0}^n  (-1)^k   \binom{2k}{k}  \sum_{j=0}^{n-2k}  \binom{n-2k}{j}^2 3^k $
 &A101500&binomial numbers (the formulas\\
&&&& in this OEIS entry are not correct).\\
\hline
&&&&Some coefficients are 0, as gcd(parts) $\neq  1$.\\
$\N$ & $d\N$  &  $\displaystyle  D_{dn}= \binom{(d+1) n+d-1}{n}$  &A045721  &  For $d=2$, related to \\
&&&&lattice paths,  trees, standard tableaux...   \\
\hline
&&&A000079&\\
$\{1^\alpha,2^\beta\}$ & $2\N$  & $D_{2n}=\beta^n$  &  A000244  & The notation "$2^\beta$" means that the part $2$ \\
&&&A000302...&comes with a weight (or multiplicity) $\beta$.\\
\hline
&&&&\\
$\{1,2\}$ & $\N$  &  $\displaystyle  D_n=\sum_{k=0}^n \binom{n-1}{ k} \binom{n-k}{ k}$  &A005773  & directed animals (and numerous avatars  \\
&&&& of Motzkin paths, constrained matrices...).\\
\hline
\end{tabular}
\caption{Summary of  the main closed-form formulas (for $m=2$) of our Section 3.}
} 
\end{table}

\section{Local limit theorem for the number of parts in restricted compositions}\label{sec:local}
The discussion in this section pretty much gathers what has been
developed in various parts of the compendium on Analytic Combinatorics
by Flajolet \& Sedgewick~\cite{fs}. 

Our main generating function (see Equation~(\ref{compsch}))
is a particular case of a more general composition\footnote{We cannot
 escape this polysemy: Compositions are enumerated by a
 composition!} scheme 
considered in Flajolet and Sedgewick, namely~$F(z,u)=g(uh(z))$. In our
case~$g(y)=1/(1-y)$ and~$h(z)=\p(z)$. According to terminology used in
\cite[Definition~IX.2, p.~629, Sec. IX.3]{fs}, under our assumption
that~$\sum_{j\in\mathcal P}p_j>1$ the 
scheme is supercritical 
(i.e., when $z$ increases, one meets the singularity $y=1$ of $g(y)$ before any other potential singularity of $p(z)$). 
As a consequence, the number of parts~$X_n^{\mathcal P}$ is asymptotically normal as~$n\to\infty$, with both the mean and the variance linear in~$n$. We now briefly recapitulate the statements from~\cite{fs}. 
 The equation 
$\p(z)=1$ has a unique positive root~$\rho\in(0,1)$. As a consequence,~$F(z,1)$ has a dominant simple pole as its singularity and thus the number $P_n$ of compositions of~$n$ with all parts in~$\mathcal P$ is
\begin{equation}\label{asymptPn}
[z^n]F(z,1)\sim\frac1{\rho p'(\rho)}\rho^{-n}(1+O(\eps^n))\,,
\end{equation}
where~$\eps$ is a positive number less than 1, see~\cite[Theorem~V.1, p.~294]{fs}. 
The probability generating function of~$X_n^{\mathcal P}$ is given by
\[f_n(u)=\frac{[z^n]F(z,u)}{[z^n]F(z,1)}.
\]
In a sufficiently small neighborhood of~$u=1$, as a function of~$z$,~$F(z,u)$ given in (\ref{compsch}) has a dominant singularity~$\rho(u)$ which is the unique positive solution of the equation
 \[u \p(\rho(u))=1.\]
 Consequently, 
\[f_n(u)=\frac{[z^n]F(z,u)}{[z^n]F(z,1)}\sim \frac{p'(\rho(1))}{p'(\rho(u))}\cdot\left(\frac{\rho(u)}{\rho(1)}\right)^{-n-1}.
\]
It follows from the analysis of supercritical sequences given in~\cite[Proposition~IX.7, p. 652]{fs} that the number of parts~$X_n^{\mathcal P}$ satisfies
\[\frac{X_n^{\mathcal P}-EX_n^{\mathcal P}}{\sqrt{\mbox{var}(X_n^{\mathcal P})}}\stackrel d\longrightarrow N(0,1),\]
where~$N(0,1)$ denotes a standard normal random variable whose distribution function is given by
\[\Phi(x)=\frac1{\sqrt{2\pi}}\int_{-\infty}^xe^{-t^2/2}dt,\]
and where the symbol \lq\lq$\stackrel d\longrightarrow$\rq\rq\ denotes the convergence in distribution. The asymptotic expressions for the expected value and the variance of~$X_n^{\mathcal P}$ are given by

\begin{equation}\label{eqn:mu} 
EX_n^{\mathcal P}=\frac n{\rho p'(\rho)}+O(1)   \text{\qquad with $\rho\in(0,1)$ such that $p(\rho)=1$,}   
\end{equation}
\begin{equation}\label{eqn:var} 
\mbox{var}(X_n^{\mathcal P})=K n+O(1) \text{\qquad where } K=\frac{\rho p''(\rho)+p'(\rho)-\rho(p'(\rho))^2}{\rho^2(p'(\rho))^3}\,.  \end{equation}

[Note that the expression for the coefficient $K$ of the variance given in Proposition~IX.7 in~\cite{fs} is incorrect; the correct version is~$(\rho h''(\rho)+h'(\rho)-\rho h'(\rho)^2)/(\rho h'(\rho)^3)$, as given in many other places in the book.]

We now note that the central limit theorem can actually be
strengthened to the local limit theorem, pretty much as discussed
in~\cite[Theorem~IX.14 and the remarks following its proof on
p.~697]{fs}. Let us recall the following notion.
\begin{df}\label{def:llc} Let~$(X_n)$ be a sequence of integer valued random variables with $EX_n=\mu_n$ and~${\rm var}(X_n)=\sigma_n^2$.
Let~$(\eps_n)$ be a sequence of positive numbers going to 0. We say that~$(X_n)$ satisfies a local limit theorem (of Gaussian type) with speed~$\eps_n$ if 
\[\sup_{x\in {\mathbb R}}\left|\sigma_n\operatorname{Pr}(X_n=\lfloor \mu_n+x\sigma_n\rfloor)-\frac{e^{-x^2/2}}{\sqrt{2\pi}}\right|\le\eps_n.\]
\end{df}
As was discussed in~\cite[p.~697]{fs}, to see that the local limit
theorem holds for restricted compositions, it suffices to check that
$\rho(u)$ when restricted to the unit circle uniquely attains its
minimum\footnote{There is a typo in~\cite{fs} p.697: the
 inequality direction is wrong.} at~$u=1$.
This is what we prove in the following lemma.
\begin{lem} 
Let~$p$ be a power series with nonnegative coefficients, of radius of
convergence~$\rho_\p>0$ (possibly~$\rho_\p=+\infty$).
Let~$\rho(u)$ be as above the positive root\footnote{$\p(z)$ has
 nonnegative real coefficients and is thus increasing in a
neighborhood of 0, i.e. on~$z\in[0,+\epsilon]$.~$p$ being analytic near 0, is continuous and for any~$x \in {\mathbb R}$ small enough,
$\p(z)=x$ will therefore have a real positive root~$z_x$, and this root will be analytic in~$x$.
This is the root that we call ``real positive''.} 
of~$u \p(\rho(u))=1$. If~$\p$ is aperiodic\footnote{A power series~$\p$ 
is said to be periodic if and only if there exists a power series~$q$ and an integer~$g>1$ such that~$\p(z)=q(z^g)$.
Equivalently, the gcd of the support (=the ranks of nonzero
coefficients) of the power series~$\p$ is~$g\neq 1$. If this gcd~$g$
equals 1,
then~$p$ is said to be aperiodic.}, 
then for~$0<\frac{1}{R}<\rho_\p$ and~$t \in]0,2\pi[$, we have 
\[\rho(R) < |\rho(R e^{it})| \,,\]
i.e. the minimum on each circle is on the positive real axis.
In particular, if the radius of convergence of~$\p$ is larger than 1,
then for~$|u|=1$ and~$u\ne1$ we have \[ \rho(1)< |\rho(u)|.\]
\end{lem}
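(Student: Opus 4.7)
My plan is a classical triangle-inequality argument exploiting the nonnegativity of the coefficients of $\p$, with the aperiodicity hypothesis invoked at the final step to rule out nontrivial equality cases.

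I would first set $z:=\rho(Re^{it})$. The defining equation then reads $\p(z)=e^{-it}/R$, and in particular $|\p(z)|=1/R$. Since $\p$ has nonnegative coefficients, the triangle inequality gives $|\p(z)|\le \p(|z|)$. Combining this with $\p(\rho(R))=1/R$ and the strict monotonicity of $\p$ on $[0,\rho_\p)$ (using the nonnegativity and the existence of at least two distinct exponents in the support $S$ of $\p$) yields $\p(|z|)\ge \p(\rho(R))$ and hence the ``easy'' inequality $|z|\ge\rho(R)$.

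To upgrade this to a strict inequality for $t\in(0,2\pi)$, I would assume $|z|=\rho(R)$ and seek a contradiction. The equality forces $|\p(z)|=\p(|z|)$; writing $z=\rho(R)e^{i\theta}$, this means that every nonzero term $\p_j\rho(R)^j e^{ij\theta}$ (for $j\in S$) shares a common argument. Substituting into $\p(z)=e^{-it}/R$ and matching phases against $\p(\rho(R))=1/R$ then produces the crucial identity $e^{ij\theta}=e^{-it}$ for every $j\in S$.

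The main obstacle is the final exploitation of aperiodicity. Taking ratios in the identity above gives $e^{i(j-k)\theta}=1$ for all $j,k\in S$; the aperiodicity of $\p$ (no factorization $\p(z)=q(z^g)$ with $g>1$) is precisely what prevents $\theta$ from lying on a nontrivial proper subgroup of the circle, forcing $\theta\in 2\pi\mathbb{Z}$. Once $\theta\equiv 0$, we get $z=\rho(R)$ and hence $e^{-it}=1$, contradicting $t\in(0,2\pi)$. The ``in particular'' statement at the end of the lemma is then simply the special case $R=1$.
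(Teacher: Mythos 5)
Your argument reproduces the paper's own proof, just more explicitly. The paper also starts from the triangle inequality $\p(|\rho(u)|)\geq|\p(\rho(u))|$, then asserts that equality can hold ``only if $\p(\rho(u))$ has just nonnegative terms, but this is not possible if $\rho(u)\notin\mathbb{R}^+$ as $\p$ is aperiodic,'' and finishes by applying $\p^{-1}$. You are simply unwinding the equality case of the triangle inequality into the phase identity $e^{ij\theta}=e^{-it}$ for $j\in S$. So the overall approach is the same.

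However, the final step — ``aperiodicity of $\p$ \ldots forces $\theta\in2\pi\mathbb{Z}$'' — does not follow, and this is a genuine gap that you share with the paper's proof. Taking ratios only yields $(j-k)\theta\in 2\pi\mathbb{Z}$ for $j,k\in S$, hence $\theta\in\frac{2\pi}{d}\mathbb{Z}$ with $d:=\gcd\{\,j-k : j,k\in S\,\}$. The lemma's footnote defines aperiodicity as $\gcd(S)=1$, which is strictly weaker than $d=1$ unless $0\in S$ — and $0\notin S$ here since parts of compositions are $\geq 1$. Concretely, $\p(z)=z+z^3$ is aperiodic ($\gcd\{1,3\}=1$) but $d=2$, so $\theta=\pi$ survives your ratio argument: with $z=-\rho(R)$ one has $\p(z)=-1/R$, i.e.\ $u\p(z)=1$ for $u=-R$ (that is, $t=\pi$), and $|z|=\rho(R)$, contradicting the asserted strict inequality. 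Consistently, for $\mathcal P=\{1,3\}$ the number of parts satisfies $X_n^{\mathcal P}\equiv n\pmod 2$, so a Gaussian local limit theorem on all of $\mathbb{Z}$ (the lemma's intended application) cannot hold. The condition that actually makes the conclusion and your ratio argument go through is $\gcd\{\,j-k : j,k\in S\,\}=1$, i.e.\ aperiodicity of $\p(z)/z^{\min S}$ in the usual Flajolet--Sedgewick sense, not aperiodicity of $\p$ itself.
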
 
\begin{proof}
First,~$\p$ has nonnegative real coefficients, therefore
the triangle inequality gives~$\p(|\rho(u)|) \geq |\p(\rho(u))|$.
Equality can hold only if~$\p(\rho(u))$ has just nonnegative terms,
but this is not possible if~$\rho(u)\not \in {\mathbb R}^+$ as~$\p$ is
aperiodic with nonnegative coefficients.
Hence one has a strict triangle inequality:~$\p(|\rho(u)|) > |\p(\rho(u))| = |1/u| =1/R$ (the
middle equality is just the definition of~$\rho$ and the last
equality comes from the fact we are on the circle~$|u|=R$).
As~$\p$ is increasing on~$[0,1/R]$, we can apply~$\p^{-1}$ 
to~$\p(|\rho(u)|) > 1/R$ 
which gives~$\p^{-1}(\p(|\rho(u)|)> \p^{-1}(1/R)$, that is~$|\rho(u)|> \rho(R)$.
\end{proof}

Note that the aperiodicity condition is important,
e.g. for~$\p(z)=z^2+z^6$ (i.e.~$\rho(u)$ is the radius of convergence
of~$P(z,u)=1/(1-u (z^2+z^6))$), one has~$\rho(-1) = i \rho(1)$; 
however some periodic cases have a unique minimum on the unit circle, e.g.~$\p(z)=z^2+z^4$.
Note also that (in either periodic or aperiodic case),
the uniqueness of the minimum on the circle~$|u|=1$ at~$u=1$ 
does not hold in general for the other roots of~$u \p(\rho(u))=1$.

\section{Asymptotic probability that restricted compositions have the same number of parts}\label{sec5}
Our motivation for including the results from~\cite{fs} in the preceding section is the following theorem which considerably extends the main results of~\cite{bk}. We single out the case~$m=2$ since in some cases it has been already studied in the literature.

\subsection{Pairs of compositions}
\begin{thm}\label{thm:samenoparts} Let~$\mathcal P\subset\N$. 
The probability that two random compositions with parts in~$\mathcal P$ have the
 same number of parts is, asymptotically as~$n\to\infty$,
\[\pi_n\sim\frac C{\sqrt\pi\sqrt n},\]
where the value of~$C$ is related to the constant $K$ from
Equation~\ref{eqn:var}, namely:  
\begin{equation}\label{eqn:c}C=\frac{1}{2} \sqrt{K}= \frac{\rho(p'(\rho))^{3/2}}{2\sqrt{\rho p''(\rho)+p'(\rho)-\rho(p'(\rho))^2}}.
\end{equation}
\end{thm}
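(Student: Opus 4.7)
The plan is to reduce the problem to a purely probabilistic statement via the independence of the two compositions, and then apply the local limit theorem for $X_n^{\mathcal P}$ established in Section~\ref{sec:local}. By independence,
$$\pi_n = \sum_{k\ge 0} \bigl(\Pr(X_n^{\mathcal P}=k)\bigr)^2.$$
Writing $\mu_n = EX_n^{\mathcal P}$ and $\sigma_n^2 = \mathrm{var}(X_n^{\mathcal P})$, the local limit theorem (whose sole nontrivial hypothesis on the uniqueness of the minimum of $|\rho(u)|$ on the unit circle is discharged by the aperiodicity of $\p$, itself guaranteed by the standing assumption that $\mathcal P$ contains two coprime elements) gives, uniformly in $k$,
$$\Pr(X_n^{\mathcal P}=k) = \frac{1}{\sigma_n\sqrt{2\pi}}\, e^{-(k-\mu_n)^2/(2\sigma_n^2)} + o\!\left(\frac{1}{\sigma_n}\right).$$

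Next I would square this expansion and sum over $k$. The squared main term is
$$\sum_k \frac{1}{2\pi\sigma_n^2}\, e^{-(k-\mu_n)^2/\sigma_n^2},$$
which is a Riemann sum of step one for the integral $\int_{\mathbb R}\frac{1}{2\pi\sigma_n^2}e^{-x^2/\sigma_n^2}\,dx = \frac{1}{2\sigma_n\sqrt\pi}$; since $\sigma_n\to\infty$ the Riemann-sum error is negligible. Inserting $\sigma_n^2 \sim Kn$ from Equation~(\ref{eqn:var}) then gives $\pi_n \sim 1/(2\sqrt{K\pi n})$, which is the announced asymptotics $C/(\sqrt\pi\sqrt n)$ with $C = 1/(2\sqrt K)$; substituting the explicit formula for $K$ produces the second displayed form of $C$ in Equation~(\ref{eqn:c}).

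The main obstacle is controlling the contribution of the error and cross terms after squaring and summing, since the effective range of $k$ has width $\Theta(\sigma_n)$ and a naive bound would be too coarse: a pointwise error of $o(1/\sigma_n)$ summed crudely over $\Theta(\sigma_n)$ relevant values of $k$ threatens to swamp the main term of order $1/\sigma_n$. The remedy is standard: the local limit theorem comes with a uniform rate $\eps_n \to 0$ (extractable from the contour-integral residue analysis underlying the supercritical scheme), and Gaussian tails together with Chernoff-type large-deviation bounds (a by-product of the same moment-generating-function analysis) restrict the effective window to $|k-\mu_n|=O(\sigma_n\sqrt{\log n})$. On this window the cross term is at most $\frac{\eps_n}{\sigma_n^2}\sum_k e^{-(k-\mu_n)^2/(2\sigma_n^2)} = O(\eps_n/\sigma_n) = o(1/\sqrt n)$, and the squared error contributes similarly; both are negligible compared with the main term. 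This abstract \emph{collision} mechanism, valid for any integer-valued sequence obeying a Gaussian local limit theorem, is precisely the content of the abstract lemma hinted at in the introduction, and once it is in place the theorem follows immediately.
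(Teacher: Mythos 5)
Your argument is correct and yields $C = 1/(2\sqrt{K})$, the value consistent with the explicit right-hand side of Equation~(\ref{eqn:c}) (the intermediate expression ``$\tfrac{1}{2}\sqrt{K}$'' there reads like a misprint for $\tfrac{1}{2\sqrt{K}}$, as one can check against $C=1$ for unrestricted compositions, where $K=1/4$). However, your route differs from the paper's. Your decomposition $\pi_n = \sum_k \Pr(X_n=k)^2$ followed by a Riemann-sum approximation of the integral of the squared Gaussian density, with a truncation window around $\mu_n$ to tame the accumulation of pointwise LLT errors, is in fact exactly the mechanism the paper uses for the general $m$-tuple case (Lemma~\ref{lem:flaj}, the B\'ona--Flajolet lemma), and it works for $m=2$ as well. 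The paper's proof of Theorem~\ref{thm:samenoparts} instead goes through Lemma~\ref{lem:llc0}, whose proof is a reflection trick rather than a Riemann sum: starting from $\pi_n = \sum_{\ell} \Pr(X_n=\lfloor\mu_n\rfloor+\ell)\,\Pr(X_n=\lfloor\mu_n\rfloor+\ell)$, one replaces the second factor by $\Pr(X_n=\lfloor\mu_n\rfloor-\ell)$ --- the LLT bounds the replacement error by $O(\eps_n/\sigma_n+1/\sigma_n^2)$ \emph{uniformly in $\ell$} --- and recognizes the resulting sum as exactly $\Pr(X_n+X_n'=2\lfloor\mu_n\rfloor)$. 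Since $X_n+X_n'$ again satisfies a Gaussian LLT (its probability generating function is $f_n(u)^2$, still of quasi-power type), a single evaluation of the LLT at the one point $2\lfloor\mu_n\rfloor$ finishes the proof. The symmetrization buys a cleaner argument --- no truncation window and no bookkeeping of how $O(\eps_n/\sigma_n)$ pointwise errors sum over $\Theta(\sigma_n)$ relevant values of $k$, a point your sketch treats somewhat informally since a naive sum of such errors is of the same order as the main term --- and it returns the explicit error bound $O(\eps_n/\sigma_n + 1/\sigma_n^2)$ rather than a bare asymptotic equivalence. Its drawback is being special to $m=2$: for $m\ge 3$ the inner sum does not collapse to a single point probability, which is why the paper reverts to your Riemann-sum mechanism in the proof of Theorem~\ref{thm:msamenoparts}.
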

Before proving this theorem let us make some comments.
 
\noindent{\bf Remarks and examples:} 
\begin{itemize}
\item[(i)]\label{rem:(i)} Some special cases were considered in~\cite{bk}. They include unrestricted compositions ($\mathcal P=\N$),~$\mathcal P=\{1,2\}$, or more generally~$\mathcal P=\{a,b\}$ with~$a$,~$b$ relatively prime,
compositions with all parts of size at least~$d$ ($\mathcal P=\{n\in\N:\, n\ge d\}$), and compositions with all parts odd and at least~$d$. The arguments of~\cite{bk} rely on the analysis of the asymptotics of the bivariate generating functions, which is sometimes difficult and does not seem to be easily amenable to the analysis in the case of a general subset~$\mathcal P$ of positive integers. Our approach is much more probabilistic and relies on a local limit theorem for the number of parts in a random composition with parts in~${\mathcal P}$. This turned out to be a much more universal tool. 
\item[(ii)] To illustrate the principle behind our approach, consider the unrestricted compositions. As was observed in~\cite{hs}, in that case~$X_n^{\mathcal P}$ is distributed like~$1+{\rm Bin}(n-1,1/2)$ random variable. 
Therefore, 
\[\pi_n=\operatorname{Pr}({\rm Bin}(n-1,1/2)={\rm Bin'}(n-1,1/2))\]
where~${\rm Bin}$ and~${\rm Bin'}$ denote two independent binomial random variables with specified parameters. 
Since the second parameter is~$1/2$ we have \[{\rm Bin}(n-1,1/2)\stackrel d=n-1-{\rm Bin}(n-1,1/2).\] Therefore, by independence we get
\[
\pi_n=\operatorname{Pr}({\rm Bin}(n-1,1/2)+{\rm Bin'}(n-1,1/2)=n-1).\]
Finally, since 
\[{\rm Bin}(n-1,1/2)+{\rm Bin'}(n-1,1/2)\stackrel d={\rm Bin}(2(n-1),1/2),\]
we obtain by Stirling's formula that
\[\pi_n=\operatorname{Pr}({\rm Bin}(2n-2,1/2)=n-1)=\frac{{2n-2\choose n-1}}{2^{2n-2}}\sim\frac1{\sqrt{\pi n}}\,.\]
This is consistent with (\ref{eqn:c}) (and with
\cite{bk}) as for unrestricted compositions
$\p(z)=\sum_{k\ge1}z^k=z/(1-z)$, so that~$\rho=1/2$,~$\p'(z)=1/(1-z)^2$,
and~$\p''(z)=2/(1-z)^3$ which gives~$C=1$.

\item[(iii)] Although the above argument may look very special and heavily reliant on the properties of binomial random variables, our point here is that it is actually quite general. The key feature is that the number of parts (whether in unrestricted or arbitrarily restricted compositions) satisfies the local limit theorem of Gaussian type, and this is enough to asymptotically evaluate the probability in Theorem~\ref{thm:samenoparts}.

\item[(iv)] For another example, consider compositions of~$n$ into two parts, i.e.~$\mathcal P=\{a,b\}$ with~$a$,~$b$ relatively prime. Then Theorem~\ref{thm:samenoparts} holds with 
\begin{equation}\label{eqn:cab}C=\frac{(a\rho^a+b\rho^b)^{3/2}}{2|a-b|\sqrt{\rho^{a+b}}},\end{equation}
where~$\rho$ is the unique root of~$z^a+z^b=1$ in the interval~$(0,1)$.
In this case~$\p(z)=z^a+z^b$ so that~$p'(z)=az^{a-1}+bz^{b-1}$ and~$p''(z)= a(a-1)z^{a-2}+b(b-1)z^{b-2}$. Thus, writing the numerator of \eqref{eqn:c} as 
\[\rho(P'(\rho))^{3/2}=\frac1{\sqrt\rho}(\rho P'(\rho))^{3/2}=\frac1{\sqrt\rho}(a\rho^a+b\rho^b)^{3/2},\]
we only need to check that
\begin{equation}\label{eqn:rho}\rho P''(\rho)+P'(\rho)-\rho(P'(\rho))^2=(a-b)^2\rho^{a+b-1}.\end{equation}
But
\[\rho P''(\rho)+P'(\rho)=a^2\rho^{a-1}+b^2\rho^{b-1}\]
so that the left--hand side of \eqref{eqn:rho} is 
\[a^2\rho^{a-1}+b^2\rho^{b-1}-a^2\rho^{2a-1}-b^2\rho^{2b-1}-2ab\rho^{a+b-1}.\]
Factoring and using~$\rho^a+\rho^b=1$, we see that this is 
\[a^2\rho^{a-1}(1-\rho^a)+b^2\rho^{b-1}(1-\rho^b)-2ab\rho^{a+b-1}=(a-b)^2\rho^{a+b-1},
\]
as claimed.

When~$a=1$ and~$b=2$ we have the Fibonacci numbers relation so that~$\rho=(\sqrt5-1)/2$ and \eqref{eqn:cab} becomes
\[C=\frac{(\rho+2\rho^2)^{3/2}}{2\rho^{3/2}}=\frac12(1+2\rho)^{3/2}=\frac{5^{3/4}}2,\]
which agrees with \eqref{eqn:pn} above and also with the expression given in~\cite{bk}
(see equation (2.10) therein). However, in the case of general~$a$ and~$b$, the value of~$C$ was given in the last display of Section~3 in~\cite{bk} as
\begin{equation}\label{eqn:cabbk}\frac{\rho(a\rho^{a-1}+b\rho^{b-1})^2}{\sqrt{4(a+b)\rho^{2a+2b-2}+2(1-\rho^{2a}-\rho^{2b})(a\rho^{2a-2}+b\rho^{2b-2})}}.\end{equation}
This is incorrect as it is lacking a factor~$|a-b|$ in the denominator (so that it gives the correct value of~$C$ when~$|a-b|=1$ but not otherwise). To see this and also to reconcile \eqref{eqn:cabbk} with \eqref{eqn:cab} (up to a factor~$|a-b|$) we simplify \eqref{eqn:cabbk} by noting that~$\rho^a+\rho^b=1$ implies 
\[1-\rho^{2a}-\rho^{2b}=1-(\rho^a)^2-\rho^{2b}=(1+\rho^a)\rho^b-\rho^{2b}=\rho^b(1+\rho^a-\rho^b)=2\rho^{a+b}
\]
so that the expression under the square root sign in \eqref{eqn:cabbk} becomes
\[4\rho^{a+b-2}((a+b)\rho^{a+b}+a\rho^{2a}+b\rho^{2b})= 
4\rho^{a+b-2}(a\rho^{a}+b\rho^{b})(\rho^a+\rho^b).
\] 
 Using again~$\rho^a+\rho^b=1$ \eqref{eqn:cabbk} is seen to be 
 \[\frac{\rho^2(a\rho^{a-1}+b\rho^{b-1})^2}{2\sqrt{\rho^{a+b}(a\rho^a+b\rho^b)}}=\frac{(a\rho^a+b\rho^b)^{3/2}}{2\sqrt{\rho^{a+b}}},
 \]
 which, except for the factor~$|a-b|$ in the denominator, agrees with \eqref{eqn:cab}.

\item[(v)] Other examples from~\cite{bk} can be rederived in the same
 fashion, but we once again would like to stress universality of our
 approach. As an extreme example, we can only repeat after~\cite{fs}:
 even if we consider compositions into twin primes,~$\mathcal
 P=\{3,5,7,11,13,17, 19, 29,31,\dots\}$, we know that the
 probability of two such compositions having the same number of parts
 is of order~$1/\sqrt n$. This is rather remarkable, considering the
 fact that it is not even known whether this set~$\mathcal P$ is
 finite or not. 
\end{itemize}

\noindent{\bf Proof of Theorem~\ref{thm:samenoparts}.} This will follow immediately from the following lemma applied to~$X_n=X_n^{\mathcal P}$ and formula \eqref{eqn:var} which gives the expression for 
$\sigma_n$. This lemma should be compared with a more general Lemma~6 of~\cite{bf}. We include our proof to illustrate that seemingly very special arguments used in item~(ii) are actually quite general.\qed
\begin{lem}\label{lem:llc0}
Let~$(X_n)$ with~$EX_n=\mu_n$ and~${\rm var}(X_n)=\sigma_n^2\to\infty$ as~$n\to\infty$, be a sequence of integer valued random variables satisfying a local limit theorem (of Gaussian type) with speed~$\eps_n$ as described in Definition~\ref{def:llc}. Let~$(X_n')$ be an independent copy of~$(X_n)$ defined on the same probability space. Then 
\[\pi_n=\operatorname{Pr}(X_n=X_n')=\frac1{2\sqrt{\pi}\sigma_n}+O\left(\frac{\eps_n}{\sigma_n}+\frac1{\sigma_n^2}\right).\]
\end{lem}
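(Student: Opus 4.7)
By independence,
\[
\pi_n \;=\; \Pr(X_n=X_n') \;=\; \sum_{k\in\mathbb Z}\Pr(X_n=k)^2,
\]
so the task reduces to evaluating this sum using the local limit hypothesis. The plan is to set $g(x)=\tfrac{1}{\sqrt{2\pi}}e^{-x^2/2}$, $x_k=(k-\mu_n)/\sigma_n$, and to rewrite the LLT of Definition~\ref{def:llc} in the form
\[
\Pr(X_n=k)=\frac{g(x_k)}{\sigma_n}+\frac{e_k}{\sigma_n},\qquad |e_k|\le\eps_n,
\]
after noting that $\lfloor\mu_n+x_k\sigma_n\rfloor=k$. Squaring and summing over $k$ yields the three contributions
\[
\pi_n=\underbrace{\frac{1}{\sigma_n^2}\sum_k g(x_k)^2}_{(\mathrm I)}
+\underbrace{\frac{2}{\sigma_n^2}\sum_k g(x_k)\,e_k}_{(\mathrm{II})}
+\underbrace{\frac{1}{\sigma_n^2}\sum_k e_k^2}_{(\mathrm{III})},
\]
which I will handle in turn.

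For (I), I would recognize $\tfrac{1}{\sigma_n}\sum_k g(x_k)^2$ as a Riemann sum of step $1/\sigma_n$ for the smooth, rapidly decaying function $g^2$, whose integral equals $\int_{-\infty}^{\infty}\frac{1}{2\pi}e^{-y^2}\,dy=\frac{1}{2\sqrt\pi}$. Since $g^2\in C^\infty$ with bounded derivatives, the Riemann-sum error is $O(1/\sigma_n)$ (in fact exponentially small by Poisson summation, but the crude bound is already enough), so (I)$=\frac{1}{2\sqrt\pi\,\sigma_n}+O(1/\sigma_n^2)$, producing the announced main term.

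For the error terms, the key observation is that although $|e_k|\le\eps_n$ holds uniformly, one should never bound $\sum_k|e_k|$ by $\eps_n\cdot\#\{k\}$; instead one uses the triangle inequality together with $\sum_k\Pr(X_n=k)=1$ and $\frac{1}{\sigma_n}\sum_k g(x_k)=1+O(1/\sigma_n)$ to get
\[
\sum_k |e_k|\le \sigma_n\sum_k\Pr(X_n=k)+\sum_k g(x_k)=O(\sigma_n).
\]
This immediately gives $|(\mathrm{II})|\le \frac{2\eps_n}{\sigma_n^2}\sum_k g(x_k)=O(\eps_n/\sigma_n)$, and also
\[
(\mathrm{III})\le\frac{\eps_n}{\sigma_n^2}\sum_k|e_k|=O(\eps_n/\sigma_n).
\]
Combining (I), (II), (III) yields $\pi_n=\frac{1}{2\sqrt\pi\,\sigma_n}+O\!\left(\frac{\eps_n}{\sigma_n}+\frac{1}{\sigma_n^2}\right)$, as desired.

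The main technical subtlety — and the only place where one can easily slip — is exactly this treatment of (III): a naive use of the uniform pointwise bound $|e_k|\le\eps_n$ would produce a divergent sum. The trick is to dominate one factor of $|e_k|$ by $\eps_n$ and estimate the remaining $\sum|e_k|$ via the two normalisations above. Everything else is either a direct application of the local limit hypothesis or standard Riemann-sum/Gaussian-integral bookkeeping.
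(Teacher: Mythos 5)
Your proof is correct, and it takes a genuinely different route from the paper's. The paper proceeds by a symmetrization argument: using the LLT and the smoothness of the Gaussian density, it first shows that $\operatorname{Pr}(X_n=\lfloor\mu_n\rfloor+\ell)$ and $\operatorname{Pr}(X_n=\lfloor\mu_n\rfloor-\ell)$ differ by $O(\eps_n/\sigma_n+1/\sigma_n^2)$ uniformly in $\ell$, deduces that $\pi_n=\operatorname{Pr}(X_n+X_n'=2\lfloor\mu_n\rfloor)+O(\eps_n/\sigma_n+1/\sigma_n^2)$, and then applies the local limit theorem to the sum $X_n+X_n'$ at a single point. That last step is where the paper steps slightly outside the abstract hypotheses of the lemma: it justifies the LLT for $X_n+X_n'$ by appealing to the quasi-power structure of the probability generating function, which is a property of the specific composition scheme rather than a consequence of Definition~\ref{def:llc} alone. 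Your argument avoids this entirely: you substitute $\operatorname{Pr}(X_n=k)=(g(x_k)+e_k)/\sigma_n$ straight from the LLT, expand, and identify the leading Riemann sum for $\int g^2=1/(2\sqrt{\pi})$. The crucial point, which you correctly flag, is bounding $\sum_k|e_k|=O(\sigma_n)$ via the triangle inequality and the two normalizations $\sum_k\operatorname{Pr}(X_n=k)=1$ and $\sigma_n^{-1}\sum_k g(x_k)=1+O(1/\sigma_n)$, rather than multiplying the uniform bound $|e_k|\le\eps_n$ by the (unbounded) number of lattice points in the support. In effect your proof is a fully quantitative version of the B\'ona--Flajolet Riemann-sum argument the paper uses for Lemma~\ref{lem:flaj}; it buys an elementary, self-contained proof valid under the lemma's hypotheses exactly as stated, at the small cost of losing the probabilistic picture $\pi_n\approx\operatorname{Pr}(X_n+X_n'=2\lfloor\mu_n\rfloor)$ that the paper emphasizes.
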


\begin{proof}
For~$X_n$ and~$X_n'$ as in the statement 
 we have
\begin{eqnarray}\nonumber
\pi_n=\operatorname{Pr}(X_n=X_n')&=&\sum_{k\ge1}\operatorname{Pr}(X_n=k=X_n')=\sum_{k\ge1}\operatorname{Pr}^2(X_n=k)\\
&=&\sum_{\ell=-\infty}^\infty\operatorname{Pr}(X_n=\lfloor\mu_n\rfloor+\ell)\operatorname{Pr}(X_n=\lfloor\mu_n\rfloor+\ell).\label{eqn:sumprod}
\end{eqnarray}
Now,
\[\operatorname{Pr}(X_n=\lfloor\mu_n\rfloor+\ell)=\operatorname{Pr}(X_n=\lfloor\mu_n\rfloor-\ell)+\Big\{\operatorname{Pr}(X_n=\lfloor\mu_n\rfloor+\ell)-
\operatorname{Pr}(X_n=\lfloor\mu_n\rfloor-\ell)\Big\}.
\]
To estimate the term in the curly brackets take~$x_+$ and~$x_-$ such that
\[\lfloor \mu_n\rfloor+\ell=\lfloor\mu_n+x_+\sigma_n\rfloor,\quad\mbox{and}\quad
\lfloor \mu_n\rfloor-\ell=\lfloor\mu_n-x_-\sigma_n\rfloor.
\]
By elementary considerations,
$-2 \{\mu_n\}/\sigma_n \le x_{+}-x_{-}\le 2(1-\{\mu_n\})/\sigma_n$
(where~$\{z\}$ is the fractional part of~$z$),
hence~$|x_+-x_-|\le2/\sigma_n$. Then
$$\operatorname{Pr}(X_n=\lfloor\mu_n\rfloor+\ell)-\operatorname{Pr}(X_n=\lfloor\mu_n\rfloor-\ell)  = \frac1{\sqrt{2\pi}\sigma_n}\left(e^{-\frac{x_+^2}2}-e^{-\frac{x_-^2}2}\right) \qquad \qquad$$
$$+\left(\operatorname{Pr}(X_n=\lfloor\mu_n+x_+\sigma_n\rfloor)-\frac{e^{-x_+^2/2}}{\sqrt{2\pi}\sigma_n}\right)
-\left(\operatorname{Pr}(X_n=\lfloor\mu_n-x_-\sigma_n\rfloor)-\frac{e^{-x_-^2/2}}{\sqrt{2\pi}\sigma_n}\right).$$
The absolute value of the second term is 
\[\frac1{\sigma_n}\left|\sigma_n\operatorname{Pr}(X_n=\lfloor\mu_n+x_+\sigma_n\rfloor)-\frac1{\sqrt{2\pi}}e^{-\frac{x_+^2}2}\right|\le\frac{\eps_n}{\sigma_n},
\]
\noindent and similarly with the third term. 
Applying the inequality~$|f(x)-f(y)|\le |x-y| \sup |f'(t)|$ to the
first term gives
$e^{-\frac{x_+^2}2}-e^{-\frac{x_-^2}2} \le |x_+-x_-| = O(1/\sigma_n)$ and so, the first term is~$O(1/\sigma^2_n)$. Therefore,
\[\Big|\operatorname{Pr}(X_n=\lfloor\mu_n\rfloor+\ell)-
\operatorname{Pr}(X_n=\lfloor\mu_n\rfloor-\ell)\Big|=O\left(\frac{\eps_n}{\sigma_n}+\frac1{\sigma_n^2}\right).
\]
Coming back to equation (\ref{eqn:sumprod}), we see that 
\begin{eqnarray*}&&\sum_{\ell=-\infty}^\infty \operatorname{Pr}(X_n=\lfloor\mu_n\rfloor+\ell)\left(\operatorname{Pr}(X_n=\lfloor\mu_n\rfloor-\ell)+O\left(\frac{\eps_n}{\sigma_n}+\frac1{\sigma_n^2}\right)\right)
\\ &&\quad=
\left(\sum_{\ell=-\infty}^\infty\operatorname{Pr}(X_n=\lfloor\mu_n\rfloor+\ell,
X_n'=\lfloor\mu_n\rfloor-\ell)\right)+1 \times
O\left(\frac{\eps_n}{\sigma_n}+\frac1{\sigma_n^2}\right)\\ &&\quad=
\operatorname{Pr}(X_n+X_n'=2\lfloor\mu_n\rfloor)
+O\left(\frac{\eps_n}{\sigma_n}+\frac1{\sigma_n^2}\right).
\end{eqnarray*}
Since~$X_n+X_n'$ is  a sum of two i.i.d. random variables, it has mean~$2\mu_n$ and the variance~$2\sigma_n^2$. Furthermore, since each of the summands satisfies the local limit theorem of Gaussian type, so does the sum (its probability generating function is the square of~$f_n(u)$ and thus falls into quasi-power category, just as~$f_n(u)$ does). Since~$2\lfloor\mu_n\rfloor=\lfloor2\mu_n+x\sqrt2\sigma_n\rfloor$ for some~$x=O(1/\sigma_n)$, just as before we have
\[\left|\sqrt2\sigma_n\operatorname{Pr}(X_n+X_n'=\lfloor2\mu_n\rfloor)-\frac1{\sqrt{2\pi}}\right|=O\left(\eps_n+\frac1{\sigma_n}\right).\]
Consequently,
\[\pi_n=\operatorname{Pr}(X_n=X_n')=\operatorname{Pr}(X_n+X_n'=\lfloor2\mu_n\rfloor)=\frac1{2\sqrt{\pi}\sigma_n}+O\left(\frac{\eps_n}{\sigma_n}+\frac1{\sigma_n^2}\right),\]
which completes the proof of Lemma~\ref{lem:llc0} and of Theorem~\ref{thm:samenoparts}.
\end{proof}
\subsection{Tuples of compositions}
Here we sketch a proof of the following extension of Theorem~\ref{thm:samenoparts}. 
\begin{thm}\label{thm:msamenoparts} Let~$\mathcal P\subset\N$ and let
~$m\ge2$ be fixed. 
Then, the probability~$\pi_{n}$ that~$m$ randomly and independently chosen compositions with parts in~$\mathcal P$ all have the
 same number of parts is, asymptotically as~$n\to\infty$,
\[\pi_{n}\sim\frac {C_m}{\sqrt{(\pi n)^{m-1}}},\]
where $C_m$ is related to the constant $K$ from
Equation~\ref{eqn:var}, namely: 
\[C_m=\frac1{\sqrt{2^{m-1}m}} \sqrt{K}^{m-1}= \frac1{\sqrt{2^{m-1}m}}\left(\frac{\rho^2(p'(\rho))^{3}}{\rho p''(\rho)+p'(\rho)-\rho(p'(\rho))^2}\right)^{(m-1)/2}.
\]
\end{thm}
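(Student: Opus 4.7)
The plan is to extend the technique of Lemma~\ref{lem:llc0} from two to $m$ independent copies. Let $X_n^{(1)},\dots,X_n^{(m)}$ denote i.i.d.~copies of $X_n^{\mathcal P}$. The starting identity is
\[\pi_n = \Pr\!\left(X_n^{(1)} = \dots = X_n^{(m)}\right) = \sum_{k\ge 1} \Pr(X_n^{\mathcal P}=k)^m.\]
The local limit theorem of Section~\ref{sec:local} gives, uniformly in $k$,
\[\Pr(X_n^{\mathcal P}=k) = \frac{1}{\sqrt{2\pi}\,\sigma_n}\,e^{-(k-\mu_n)^2/(2\sigma_n^2)} + O\!\left(\frac{\eps_n}{\sigma_n}\right),\]
with $\sigma_n^2 = Kn + O(1)$ and some $\eps_n \to 0$. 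For $m=2$ the symmetry trick of Lemma~\ref{lem:llc0} turned the sum into a single probability involving $X_n+X_n'$; that trick does not extend directly to $m \ge 3$, so I would instead estimate the sum by a direct Gaussian integral approximation.

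First I would split the sum at an auxiliary level $A = A(n) \to \infty$ with $A/\sigma_n \to 0$ (for instance $A = \log n$), writing $\mathcal B = \{k : |k-\mu_n| \le A\sigma_n\}$ and $\mathcal T$ for its complement. On the bulk $\mathcal B$, the Gaussian term is $\Theta(1/\sigma_n)$ and dominates the $O(\eps_n/\sigma_n)$ error, so raising to the $m$-th power gives, uniformly on $\mathcal B$,
\[\Pr(X_n^{\mathcal P}=k)^m = \frac{1}{(2\pi)^{m/2}\,\sigma_n^{m}}\,e^{-m(k-\mu_n)^2/(2\sigma_n^2)} \,(1+o(1)).\]
Summing over $k\in\mathcal B$ and recognising the right-hand side as a Riemann sum of step one for a smooth Gaussian of width $\sigma_n/\sqrt m \to \infty$, I would replace it by
\[\int_{\mathbb R} \frac{e^{-mx^2/(2\sigma_n^2)}}{(2\pi)^{m/2}\sigma_n^{m}}\,dx = \frac{1}{(2\pi\sigma_n^2)^{(m-1)/2}\sqrt m},\]
with relative error $o(1)$. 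On the tail $\mathcal T$, the uniform bound $\Pr(X_n^{\mathcal P}=k) = O(1/\sigma_n)$ together with the Gaussian tail estimate $\sum_{k\in\mathcal T}\Pr(X_n^{\mathcal P}=k) = o(1)$ (Chebyshev suffices) yields
\[\sum_{k\in\mathcal T}\Pr(X_n^{\mathcal P}=k)^m \le \max_k \Pr(X_n^{\mathcal P}=k)^{m-1} \sum_{k\in\mathcal T}\Pr(X_n^{\mathcal P}=k) = o\!\left(\sigma_n^{-(m-1)}\right),\]
which is negligible compared with the bulk contribution.

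Combining the two pieces and substituting $\sigma_n^2 = K n + O(1)$ yields
\[\pi_n = \frac{1}{\sqrt m\,(2\pi K n)^{(m-1)/2}}\,(1+o(1)),\]
which is precisely $C_m/\sqrt{(\pi n)^{m-1}}$ with the constant $C_m$ of the statement (using the explicit form of $K$ from Equation~\ref{eqn:var}). The main technical obstacle is the bookkeeping of error terms: the additive $O(\eps_n/\sigma_n)$ from the LLT, once raised to an $m$-th power and cross-multiplied with Gaussian factors, produces several error terms that must be shown, uniformly on $\mathcal B$, to be of smaller order than the leading $1/\sigma_n^m$ term; and the Riemann-sum-to-integral replacement has to be controlled uniformly, which is why the bulk width $A\sigma_n$ should be chosen large enough to absorb the Gaussian mass but small enough to keep the Euler-Maclaurin error negligible. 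The case $m=2$ of Lemma~\ref{lem:llc0} is a useful sanity check throughout, and the quasi-power property of the probability generating function $f_n(u)^m$ of $X_n^{(1)}+\dots+X_n^{(m)}$ (each factor being quasi-power by Section~\ref{sec:local}) guarantees that no periodicity issue obstructs the LLT at any intermediate step.
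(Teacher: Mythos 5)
Your proposal is correct and takes essentially the same approach as the paper: the paper reduces Theorem~\ref{thm:msamenoparts} to Lemma~\ref{lem:flaj} (attributed to B\'ona--Flajolet), whose proof is exactly the Riemann-sum-to-integral conversion of $\sum_k \operatorname{Pr}^m(X_n=k)$ that you carry out, with the paper's phrase ``after having first restricted the range of $x_k$'s to a large compact set'' playing the role of your bulk/tail truncation at $|k-\mu_n|\le A\sigma_n$. The only cosmetic difference is that you specialize immediately to the Gaussian density and spell out the error bookkeeping that the paper leaves implicit.
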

{\bf Remark.} For unrestricted compositions, the expression in the big parentheses is 2 (see (i) in Remarks above). This gives~$C_m=\sqrt{2^{m-1}/m}$ as stated in Section~\ref{sec:dom}.

\noindent{\it Proof of Theorem~\ref{thm:msamenoparts}.} This follows immediately from the following statement which itself is a straightforward extension of Lemma~6 of~\cite{bf} with essentially the same proof. We will be using it for Gaussian density in which case
\[\int_{-\infty}^{\infty}g^m(x)dx=\frac1{\sqrt{(2\pi)^m}}\int_{-\infty}^\infty e^{-\frac{mx^2}2}dx=\frac1{\sqrt{(2\pi)^{m-1}}}\ \frac1{\sqrt m}.
\]
 
\begin{lem}\label{lem:flaj} {\rm (B\'ona--Flajolet).} Let~$(X_n)$ be integer valued with~$\mu_n=EX_n$,~$\sigma_n^2=\operatorname{var}(X_n)\to\infty$ as~$n\to\infty$. Let~$g$ be the probability density function and suppose that
\[\lim_{n\to\infty}\sup_{x}|\operatorname{Pr}(X_n=\lfloor\mu_n+x\sigma_n\rfloor)-g(x)|=0.\]
Let further~$(X_n^{(k)})$,~$k=1,\dots,m$ be independent copies of the sequence~$(X_n)$ defined on the same probability space. Then 
\[\sigma_n^{m-1}\operatorname{Pr}(X_n^{(1)}=X_n^{(2)}=\dots=X_n^{(m)})\longrightarrow\int_{-\infty}^\infty g^m(x)dx,\quad\mbox{as}\quad n\to\infty.\]
\end{lem}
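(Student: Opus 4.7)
The plan is to follow the template of the proof of Lemma~\ref{lem:llc0}, but to replace the symmetrization/convolution trick (which is clean only for $m=2$) with a Riemann-sum interpretation that works uniformly for all $m$. By independence and integer-valuedness,
\[
\operatorname{Pr}(X_n^{(1)}=X_n^{(2)}=\cdots=X_n^{(m)})
=\sum_{k\in\mathbb{Z}}\operatorname{Pr}(X_n=k)^m.
\]
Writing $k=\lfloor\mu_n\rfloor+\ell$ and $x_\ell=(k-\mu_n)/\sigma_n$, so that the $x_\ell$ form a grid of mesh $1/\sigma_n$, the local limit hypothesis (which I will read as $\sigma_n\operatorname{Pr}(X_n=k)=g(x_\ell)+o(1)$ uniformly in $k$, matching Definition~\ref{def:llc}) yields
\[
\sigma_n^{m-1}\sum_k\operatorname{Pr}(X_n=k)^m
=\frac{1}{\sigma_n}\sum_\ell\bigl(g(x_\ell)+o(1)\bigr)^m,
\]
and the right-hand side is precisely a Riemann sum for $\int_{-\infty}^{\infty}g^m(x)\,dx$ with step $1/\sigma_n$.

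The only non-routine step is the tail control, since uniform pointwise convergence of $\sigma_n\operatorname{Pr}(X_n=k)$ to $g$ does not by itself control contributions from $|\ell|$ large. I would truncate at $|\ell|\le M\sigma_n$ for a constant $M$ and split into three pieces. On the central window, the uniform LLT gives a quantitative Riemann-sum approximation to $\int_{-M}^{M}g^m(x)\,dx$. On the tails of the integral, the hypothesis that $g$ is a probability density (together with the bound $\|g\|_\infty<\infty$ inherited from the LLT) gives $g\in L^m$, so $\int_{|x|>M}g^m\to 0$ as $M\to\infty$; in the Gaussian case this is just the elementary estimate $\int_{|x|>M}e^{-mx^2/2}dx\to 0$. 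For the tail of the sum, combine the uniform bound $\sigma_n\operatorname{Pr}(X_n=k)\le B$ (from the LLT plus $\|g\|_\infty<\infty$) with Chebyshev's inequality $\operatorname{Pr}(|X_n-\mu_n|>M\sigma_n)\le 1/M^2$ to get
\[
\sigma_n^{m-1}\sum_{|\ell|>M\sigma_n}\operatorname{Pr}(X_n=k)^m
\le B^{m-1}\sum_{|\ell|>M\sigma_n}\operatorname{Pr}(X_n=k)
\le \frac{B^{m-1}}{M^2}.
\]
Sending $n\to\infty$ first and then $M\to\infty$ assembles the three pieces into $\int_{-\infty}^\infty g^m(x)\,dx$.

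The main obstacle is this tail argument: the lemma is stated with a pointwise LLT hypothesis only, so some crude but global input (here Chebyshev, which is available because $\sigma_n^2={\rm var}(X_n)$ is finite by assumption) is indispensable to promote the pointwise Riemann-sum convergence to integrability over all of $\mathbb{R}$. Once that is set up, everything else is a routine Riemann-sum computation, and the Gaussian specialization $\int e^{-mx^2/2}dx/(2\pi)^{m/2}=1/\sqrt{(2\pi)^{m-1}m}$ that the authors record right before the lemma, combined with $\sigma_n^2\sim Kn$ from \eqref{eqn:var}, yields Theorem~\ref{thm:msamenoparts} immediately.
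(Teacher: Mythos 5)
Your proof is correct and follows essentially the same Riemann-sum approach as the paper's (which itself just adapts Lemma~6 of B\'ona--Flajolet). The only substantive addition is that where the paper remarks parenthetically that one should ``first restrict the range of $x_k$'s to a large compact set,'' you make the truncation rigorous with the Chebyshev bound $\sigma_n^{m-1}\sum_{|\ell|>M\sigma_n}\operatorname{Pr}(X_n=k)^m \le B^{m-1}/M^2$, which is a clean way to fill that gap.
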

To see this we just follow the argument in~\cite[Lemma~6]{bf} with obvious adjustments: the left--hand side above is
\[\sigma_n^{m-1}\sum_{k=1}^\infty\operatorname{Pr}^m(X_n=k)=\sigma_n^{m-1}\sum_{k=1}^\infty\operatorname{Pr}^m(X_n=\lfloor\mu_n+x_k\sigma_n\rfloor),
\]
with~$\frac{k-\mu_n}{\sigma_n}\le x_k<\frac{k+1-\mu_n}{\sigma_n}$. This is further equal to 
\[\frac1{\sigma_n}\sum_k(\sigma_n\operatorname{Pr}(X_n=\lfloor\mu_n+x_k\sigma_n\rfloor))^m\sim\frac1{\sigma_n}\sum_kg^m(x_k)\sim\frac1{\sigma_n}\int_{-\infty}^\infty g^m(\frac{x-\mu_n}{\sigma_n})dx,\]
where the first approximation holds by the assumption of the lemma (after having first restricted the range of~$x_k$'s to a large compact set) and the second by the Riemann sum approximation of the integral. Since the expression on the right is~$\int_{-\infty}^\infty g^m(x)dx$, the result follows.\hfill$\square$

\section{Concluding remarks}

\begin{enumerate}
\item
In this article, we restricted our attention to compositions (giving
first several new closed-form formulas, and then going to the asymptotics), 
but it is clear that Lemma~\ref{lem:flaj} can be applied to many
 combinatorial structures, e.g. the probability that~$m$ random
permutations of size~$n$ have the same number 
of cycles (see~\cite{wilf} for the case~$m=2$), or the probability that $m$ permutations 
have a longest increasing subsequence of the same length,
or the probability that~$m$ random planar maps have a largest component
of same size. This leads to interesting analytic/computational considerations,
as it will involve evaluating the integral of~$g^m(x)$ where~$g(x)$
will be the Tracy--Widom distribution density (provided the local limit
theorem holds, which has not been proven yet), or the map-Airy
distribution density (for which a local limit theorem was established,
see~\cite{BaFlScSo01}).

\item A similar approach can be also applied to tuples of combinatorial
structures following~$m$ different local limit laws (with~$m$ densities
having fast decreasing tails), as long as they have the same mean.

\item 
When the means are not the same, the probability of the same number of parts is generally of much smaller order. This is because if~$X_n$ has mean~$cn$ and~$X'_n$ has mean~$c'n$ and both have linear variances, then assuming w.l.o.g.~$c>c'$ and choosing~$\alpha<\frac{c-c'}2$ we note that if~$|X_n-cn|< \alpha n$ and~$|X'_n-c'n|<\alpha n$ then 
\[X_n-X'_n>cn-\alpha n-(c'n+\alpha n)=(c-c'-2\alpha)n>0,\]
so that~$X_n\ne X'_n$. Therefore,
\[\pi_n=\operatorname{Pr}(X_n=X'_n)\le\operatorname{Pr}(|X_n-cn|\ge\alpha n)+\operatorname{Pr}(|X'_n-c'n|\ge\alpha n).\]
Since both~$X_n$ and~$X'_n$ converge to a Gaussian law and~$\sigma_n=\sigma n$, the first probability is roughly (with~$\beta=\alpha/\sqrt\sigma$)
\[\operatorname{Pr}\left(\frac{|X_n-cn|}{\sqrt{\sigma n}}\ge\beta \sqrt n\right)\sim\frac1{\sqrt{2\pi}}\int_{\beta\sqrt n}^\infty e^{-t^2/2}dt\sim\frac1{\sqrt{2\pi}\beta \sqrt n}e^{-\frac{\beta^2n}2},
\]
by the well--known bound on the tails of Gaussian random variables (see, e.g.~\cite[Chapter~VII, Lemma~2]{fel}). This is consistent with an example discussed in Section~\ref{sec:dom}. The difficulty with making this argument rigorous is that the error in the first approximation is usually of much bigger (typically~$1/\sqrt n$) magnitude than the quantities that are approximated. However, a slightly weaker bound, namely,~$e^{-\beta n}$ (with a generally different value of~$\beta$) can be obtained by using Theorem~IX.15 in~\cite{fs} which asserts that tail probabilities of random variables falling in the scheme of quasi-powers are decaying exponentially fast. While this theorem is stated for the logarithm of~$\operatorname{Pr}(|X_n-cn|>\alpha n)$, it is clear from its proof that one actually gets exponential bound on the tail probabilities (see Equation~(88) on p.~701 in~\cite{fs} and a few sentences following it). 

\item The Gaussian local limit law explains the universality of the
$1/(\pi n)^{(m-1)/2}$ appearance for numerous combinatorial problems in which
we would force~$m$ combinatorial structures of size~$n$ to have 
an extra parameter of the same value. We also wish to point out yet another insight provided by the probabilistic approach. As we mentioned in Section~\ref{sec:dom} (see Footnote~\ref{foot:frob}), it 
allows to solve the connection constant problem intrinsic to the Frobenius
method, and therefore, a combination of these two approaches (local limit
law plus Frobenius method) gives access to full asymptotics in
numerous cases.
\end{enumerate}

\bigskip

{\bf Acknowledgements.} Part of this work was done during 
Pawe{\l}  Hitczenko's sojourn at LIPN (Laboratoire d'Informatique de
Paris Nord), thanks to the invited professor position
funded by the university of Paris 13 and the Institute
Galil\'ee. He would like to thank the members of LIPN for their hospitality. 

{\small

}

\end{document}